\newcommand{\wrt}{with respect to}
\newcommand{\E}{\mathcal{E} }
\newcommand{\T}{\mathcal{T} }
\newcommand{\m}{\mathfrak{m} }
\newcommand{\K}{\mathbb{K} }
\newcommand{\bP}{\mathbb{\partial}}
\newcommand{\rt}{\rightarrow}
\newcommand{\X}{\mathbf{X} }
\newcommand{\Y}{\mathbf{Y} }
\newcommand{\ZZ}{\mathbb{Z} }
\newcommand{\Tor}{\operatorname{Tor}}
\newcommand{\Hom}{\operatorname{Hom}}
\newcommand{\Ext}{\operatorname{Ext}}
\theoremstyle{plain}
\newtheorem{theorem}{Theorem}[section]
\newtheorem{corollary}[theorem]{Corollary}
\newtheorem{lemma}[theorem]{Lemma}
\newtheorem{proposition}[theorem]{Proposition}
\newtheorem{conjecture}[theorem]{Conjecture}
\theoremstyle{definition}
\newtheorem{definition}[theorem]{Definition}
\newtheorem{remark}[theorem]{Remark}
\newtheorem{example}[theorem]{Example}
\newtheorem{property}[theorem]{Property}
\theoremstyle{remark}
\begin{document}

\title{On derived functors  of Graded local cohomology modules}

\author{Tony J. Puthenpurakal}

\address{Department of Mathematics, Indian Institute of Technology Bombay, Mumbai 400076, India}

\email{tputhen@math.iitb.ac.in}

\author{Jyoti Singh}
\address{Department of Mathematics, Indian Institute of Technology Bombay, Mumbai 400076, India}
\email{jyotijagrati@gmail.com}

\date{\today}

\subjclass{Primary 13D45; Secondary 13N10 }
\keywords{Local cohomology modules, De Rham cohomology, Koszul homology, Weyl algebra}

\begin{abstract}
Let $K$ be a field of characteristic zero and 
let $R=K[X_1, \ldots,X_n ]$, with  standard grading. 
Let $\mathfrak{m}= (X_1, \ldots, X_n)$ and let $E$ be the $^*$injective hull of $R/\mathfrak{m}.$ 
Let $A_n(K)$ be the $n^{th}$ Weyl algebra over $K$. 
Let $I, J$ be homogeneous ideals in $R$. Fix $i,j \geq 0$ and set $M = H^i_I(R)$ and $N = H^j_J(R)$ considered as left 
$A_n(K)$-modules.
 We show the following two results for which no analogous result is known in characteristic $p > 0$.
\begin{enumerate}
 \item 
 $H^l_\mathfrak{m}(\Tor^R_\nu(M, N)) \cong E(n)^{a_{l,\nu}}$ for some $a_{l,\nu} \geq 0$.
 \item
For all $\nu \geq 0$; the finite dimensional vector space $\Tor^{A_n(K)}_\nu( M^\sharp, N)$ is concentrated in degree $-n$
(here $M^\sharp$ is the standard right $A_n(K)$-module associated to $M$).
\end{enumerate}
We also conjecture that for all $i \geq 0$  the finite dimensional vector space
$\Ext^i_{A_n(K)}(M, N)$ is concentrated in degree zero. We give a few examples which support 
this conjecture.
\end{abstract}
\maketitle

\section{Introduction}
\s \label{setup}\textit{Setup:} Let $K$ be a field of characteristic zero and let $R = K[X_1,\ldots,X_n]$.  
We give the standard grading on $R$. Let $\mathfrak{m}= (X_1, \ldots, X_n)$ and let $E$ be the $^*$injective hull of $R/\mathfrak{m}.$  Let
$A_n(K)  = K<X_1,\ldots,X_n, \partial_1, \ldots, \partial_n>$  be the $n^{th}$ Weyl algebra over $K$. 
We can consider $A_n(K)$ graded by giving $ \deg X_i = 1$  and $\deg \partial_i = -1$ for 
 $i = 1,\ldots, n$. Let $I, J$ be  homogeneous ideals in $R$. By a result due
to Lyubeznik, see \cite[2.9]{Lyubeznik},
 for each $i \geq 0$ the local cohomology module $H^i_I(R)$ is a 
 graded left holonomic $A_n(K)$-module.  Fix $i, j \geq 0$. Set $M = H^i_I(R)$ and $N = H^j_J(R)$.
Set $M^\sharp$ to be the standard right $A_n(K)$-module associated to $M$, see \ref{left-right}.

A few years ago the first author proved that the De Rham cohomology  \\
$H^\nu(\partial, H^i_I(R))$ is concentrated in degree $-n$ for each $\nu \geq 0$.
For all $\nu \geq 0$ there is a graded isomorphism
\begin{equation*}
H^\nu\left(\partial, H^i_I(R) \right)  \cong \Tor^{A_n(K)}_{n-\nu}(R^r, H^i_I(R)).  \tag{ * }
\end{equation*}
where $R^r$ is $R$ considered as a right $A_n(K)$-module by the isomorphism
$R^r = A_n(K)/(\partial)A_n(K)$. Consider $ ^lR$ to be $R$ considered as a left 
$A_n(K)$-module  via the isomorphism $ ^lR \cong A_n(K)/A_n(K)(\partial)$. It is elementary to note that $( ^lR)^\sharp \cong R^r$. We also note that $H^0_{(0)}(R) =\ ^lR$. Our first result is a considerable generalization of $(*)$.
\begin{theorem}\label{first}[with hypotheses as in \ref{setup}]
Fix $\nu \geq 0$. Then the graded $K$-vector space  $\Tor^{A_n(K)}_\nu(M^\sharp, N)$ is concentrated in degree $-n$, i.e., $\Tor^{A_n(K)}_\nu(M^\sharp, N)_j = 0$ for all $j \neq -n$.
\end{theorem}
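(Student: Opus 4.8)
The plan is to deduce Theorem~\ref{first} from the De~Rham statement $(*)$, applied not to the single module $H^i_I(R)$ but to each of the $A_n(K)$-modules $\Tor^R_q(M,N)$, by means of a base-change spectral sequence. The starting point is the projection-type isomorphism
\[
M^\sharp \otimes^{\mathbf{L}}_{A_n(K)} N \;\cong\; R^r \otimes^{\mathbf{L}}_{A_n(K)} \bigl(M \otimes^{\mathbf{L}}_R N\bigr)
\]
in the derived category of graded $K$-vector spaces, where $M\otimes^{\mathbf{L}}_R N$ is given its natural $A_n(K)$-module (``tensor'') structure. Concretely one proves this using the Koszul resolution of the diagonal bimodule $A_n(K)$ over $A_n(K)\otimes_K A_n(K)^{\mathrm{op}}$ on the $2n$ commuting elements $X_i\otimes 1-1\otimes X_i$ and $\partial_i\otimes 1-1\otimes\partial_i$: splicing $M^\sharp$ on the left and $N$ on the right presents $\Tor^{A_n(K)}_\bullet(M^\sharp,N)$ as the homology of the Koszul complex of these $2n$ operators on $M^\sharp\otimes_K N$; the first $n$ of them (acting through the $X_i$) compute $\Tor^R_q(M,N)$, using that $M^\sharp$ and $M$ carry the same right $R$-module structure, while the remaining $n$ (acting through the $\partial_i$) act on $M^\sharp\otimes_R N$ as $\pm\partial_i$ for the tensor structure on $M\otimes_R N$. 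Organising this double complex yields a spectral sequence of graded $K$-vector spaces, with all differentials homogeneous of degree $0$, whose term at $(p,q)$ is $\Tor^{A_n(K)}_p\bigl(R^r,\Tor^R_q(M,N)\bigr)$ and which abuts to $\Tor^{A_n(K)}_{p+q}(M^\sharp,N)$.

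Next I record two properties of $W_q:=\Tor^R_q(M,N)$, viewed as a graded $A_n(K)$-module via the above tensor structure (equivalently, by restricting the external product $M\boxtimes N$, a module over the $2n$-th Weyl algebra $A_{2n}(K)$, to the diagonal). First, $W_q$ is holonomic: $M$ and $N$ are holonomic by \cite[2.9]{Lyubeznik}, their external product is a holonomic $A_{2n}(K)$-module, and $M\otimes^{\mathbf{L}}_R N$ is its (shifted) inverse image along the diagonal, which again has holonomic cohomology --- here one genuinely needs the derived tensor product, since the underived $M\otimes_R N$ of holonomic modules need not be holonomic. In particular $\Tor^{A_n(K)}_p(R^r,W_q)$ is a finite-dimensional $K$-vector space. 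Second, $W_q$ is Eulerian, i.e. the Euler operator $\theta:=\sum_i X_i\partial_i$ acts on its degree-$d$ component as multiplication by $d$: this holds for $R$, hence for every $H^i_I(R)$ with $I$ homogeneous (a subquotient of an Eulerian module is Eulerian), and the tensor construction preserves the property.

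Now the key point: the argument behind $(*)$ --- the first author's graded De~Rham theorem --- goes through verbatim for an arbitrary holonomic Eulerian graded $A_n(K)$-module $W$ and shows that $\Tor^{A_n(K)}_p(R^r,W)$ is concentrated in degree $-n$. Indeed, on the De~Rham complex of $W$ one has Cartan's homotopy formula $\mathcal{L}_\theta=d\circ\iota_\theta+\iota_\theta\circ d$ for the Euler vector field ($d$ being the De~Rham differential), so the Lie derivative $\mathcal{L}_\theta$ is null-homotopic and therefore acts as the zero map on $\Tor^{A_n(K)}_p(R^r,W)$; on the other hand, $W$ being Eulerian forces $\mathcal{L}_\theta$ to act on the degree-$m$ part of $\Tor^{A_n(K)}_p(R^r,W)$ as multiplication by $m+n$, the summand $n$ being $-\deg(\partial_1\wedge\cdots\wedge\partial_n)$ and entering through the graded Koszul resolution of $R^r=A_n(K)/(\partial)A_n(K)$ (each $\partial_i$ of degree $-1$). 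Comparing the two descriptions kills every graded piece of degree $\neq -n$.

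Putting things together: by the last two paragraphs every term $\Tor^{A_n(K)}_p\bigl(R^r,\Tor^R_q(M,N)\bigr)$ of the spectral sequence of the first paragraph is a finite-dimensional graded $K$-vector space concentrated in degree $-n$; since the differentials are homogeneous of degree $0$, each successive page is again concentrated in degree $-n$, and hence so is the abutment $\Tor^{A_n(K)}_\nu(M^\sharp,N)$. The step I expect to be the main obstacle is the first one: building the bimodule Koszul resolution and the resulting double complex rigorously in the graded category and, above all, keeping track of the grading twists so that the final answer lands in degree $-n$ rather than degree $0$. A secondary difficulty is the holonomicity of $\Tor^R_q(M,N)$, which is what forces the use of the derived (rather than ordinary) tensor product over $R$ and of the finiteness of $D$-module inverse image.
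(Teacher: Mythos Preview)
Your spectral-sequence strategy is sound and genuinely different from the paper's argument, but there is a real gap. You claim that $W_q=\Tor^R_q(M,N)$ is \emph{Eulerian}, with the sole justification ``the tensor construction preserves the property.'' For $q=0$ this is a direct check on simple tensors, but for $q\geq 1$ you offer no argument, and the paper explicitly warns (Remark following Theorem~\ref{tor-R}) that even for Eulerian $M,N$ one cannot conclude that $\Tor^R_q(M,N)$ is Eulerian---only \emph{generalized} Eulerian. The obstruction is structural: Eulerian modules are not closed under extensions (\cite[3.5(1)]{MaZhang}), and any inductive Koszul computation of $\Tor^R_q$ passes through extensions of exactly this kind (Lemma~\ref{exact}). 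Your Cartan-homotopy step, as written, uses that $\mathcal{L}_\theta$ acts on degree $m$ as the \emph{scalar} $m+n$, so it does not go through.

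The repair is routine once noticed: work with generalized Eulerian throughout. That $W_q$ is generalized Eulerian is precisely Theorem~\ref{tor-R}; the De~Rham concentration statement already holds at that generality (Corollary~\ref{corollary}, resting on \cite{Puthenpurakal2}); and your Cartan argument survives with a one-line change---for generalized Eulerian $W$ the operator $\mathcal{L}_\theta-(m+n)$ is merely nilpotent on degree $m$, and combined with $\mathcal{L}_\theta=0$ on cohomology this still forces $m=-n$.

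By contrast the paper does not filter through $\Tor^R$. It identifies $\Tor^{A_n(K)}_\nu(M^\sharp,N)$ directly, via Bj$\ddot{\text{o}}$rk, with the Koszul homology of the $A_{2n}(K)$-module $M^\sharp\otimes_K N$ with respect to the $2n$ commuting operators $X_i-Y_i,\ \partial_i+\delta_i$; after checking that $M^\sharp\otimes_K N$ is generalized Eulerian and making the linear change $Z_i=X_i+Y_i$, $W_i=X_i-Y_i$, Theorem~\ref{induct}(3) finishes in one stroke. Your route is more modular---once patched, it reduces Theorem~\ref{first} to the conjunction of Theorem~\ref{tor-R} and Corollary~\ref{corollary}, each proved independently in the paper---while the paper's route handles everything simultaneously inside $A_{2n}(K)$.
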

\begin{remark}
We should note that $M^\sharp$ is a holonomic right $A_n(K)$-module. So by 
\cite[1.6.6]{Bjork},
$\Tor^{A_n(K)}_\nu(M^\sharp, N)$ is a finite dimensional $K$-vector space.
\end{remark}

By a remarkable result of Ma and Zhang \cite[1.2]{MaZhang}, if $M = H^i_I(R)$ is supported at $\m$ then  $M \cong E(n)^l$ for some $l \geq 0$ (this result also holds in characteristic $p > 0$). Our next result is a considerable generalization of this fact.
\begin{theorem}\label{second}[with hypotheses as in \ref{setup}.] Fix $\nu \geq 0$.
If $\Tor^R_\nu(M,N)$ is supported at $\m$ then $\Tor^R_\nu(M,N) \cong E(n)^l$ for some $l \geq 0 $. In particular if $\sqrt{I+J} = \m$ then for all $\nu \geq 0$ there exists $l_\nu \geq 0$ such that $\Tor^R_\nu(M,N) \cong E(n)^{l_\nu}$.
\end{theorem}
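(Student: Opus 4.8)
The plan is the following. Write $T := \Tor^R_\nu(M,N)$. First I would note that $T$ is a graded left $A_n(K)$-module in a canonical way: the derived tensor product $M\otimes^{\mathbf{L}}_R N$ is represented by a complex of left $A_n(K)$-modules (for instance by tensoring $N$ with the Spencer/Koszul resolution of $M$), so each of its homology modules $\Tor^R_\nu(M,N)$ is a graded left $A_n(K)$-module; moreover $T$ is holonomic because $M$ and $N$ are (see \cite{Bjork}). Assume now that $T$ is supported at $\m$. By Kashiwara's equivalence in the graded setting — equivalently, by the structure theory for graded holonomic $A_n(K)$-modules supported at $\m$, compare the proof of \cite[1.2]{MaZhang} — $T$ decomposes as a graded $A_n(K)$-module as
\begin{equation*}
T \;\cong\; \bigoplus_{s\in\Z} H^n_\m(R)(s)^{\,c_s}, \qquad c_s \geq 0,\quad c_s = 0 \text{ for all but finitely many } s .
\end{equation*}
Since $H^n_\m(R)\cong E(n)$ as graded $A_n(K)$-modules, the theorem reduces to showing $c_s = 0$ for every $s\neq 0$.

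The main tool is the Euler operator $\mathcal{E} = \sum_{i=1}^n X_i\partial_i \in A_n(K)$, which is homogeneous of degree $0$ and satisfies $[\mathcal{E},X_i]=X_i$ and $[\mathcal{E},\partial_i]=-\partial_i$. For a graded $A_n(K)$-module $L$ set $\theta_L := \mathcal{E} - \delta_L$, where $\delta_L$ is the $K$-linear operator acting on $L_d$ by multiplication by $d$. The commutator relations show that $\theta_L$ commutes with every $X_i$ and every $\partial_i$; hence $\theta_L$ is a graded $A_n(K)$-endomorphism of $L$, the assignment $L\mapsto\theta_L$ is a natural transformation of the identity functor on graded $A_n(K)$-modules, it is additive, and on a tensor product it obeys the coproduct rule $\theta_{L\otimes_R L'} = \theta_L\otimes\mathrm{id} + \mathrm{id}\otimes\theta_{L'}$. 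Next, a direct computation shows that $\mathcal{E}$ acts as $\delta$ on $R$ and on every localization $R_f$ with $f$ homogeneous; feeding this into the \v{C}ech complexes computing $H^i_I(R)$ and $H^j_J(R)$ gives $\theta_M = 0$ and $\theta_N = 0$. Since $\Tor^R_\bullet(M,-)$ is a universal $\delta$-functor and $\theta$, being natural, commutes with its connecting maps, the coproduct rule valid on $\Tor^R_0$ propagates to all degrees, so that $\theta_{\Tor^R_\nu(M,N)} = \Tor^R_\nu(\theta_M,\mathrm{id}) + \Tor^R_\nu(\mathrm{id},\theta_N) = 0$; that is, $\mathcal{E}$ acts as $\delta$ on $T$.

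Finally, $\mathcal{E}$ also acts as $\delta$ on $H^n_\m(R)$ (by the \v{C}ech complex on $X_1,\ldots,X_n$), hence it acts as $\delta + s$ on the shift $H^n_\m(R)(s)$, i.e. $\theta_{H^n_\m(R)(s)} = s\cdot\mathrm{id}$. Comparing with $\theta_T = 0$ and the displayed decomposition forces $s\cdot\mathrm{id}=0$ on $H^n_\m(R)(s)^{c_s}$ for each $s$, so $c_s = 0$ whenever $s\neq 0$; therefore $T\cong H^n_\m(R)^{c_0}\cong E(n)^{c_0}$, which is the first assertion with $l = c_0$. For the last assertion, if $\sqrt{I+J}=\m$ then $\operatorname{Supp}_R\Tor^R_\nu(M,N)\subseteq\operatorname{Supp}_R M\cap\operatorname{Supp}_R N\subseteq V(I)\cap V(J)=V(I+J)=\{\m\}$ for every $\nu$, so the first part applies to each $\nu$. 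I expect the one genuinely delicate point to be the descent of the property "$\mathcal{E}$ acts as $\delta$'' from $M$ and $N$ to $\Tor^R_\nu(M,N)$: this requires being precise about the canonical $A_n(K)$-module structure on $\Tor$ and checking that $\theta$ is compatible both with that structure and with the connecting homomorphisms, so that the coproduct identity for $\theta$ — immediate on $\Tor^R_0$ — propagates to higher $\Tor$ by dimension shifting. Everything else is either a standard structural fact or a short commutator computation.
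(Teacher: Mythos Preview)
Your argument is correct, and the ``delicate point'' you flag can indeed be made precise: take a graded free $A_n(K)$-resolution $\mathbb{F}\to M$ (also free over $R$), note that the coproduct rule gives $\theta_{\mathbb{F}\otimes_R N}=\theta_{\mathbb{F}}\otimes\mathrm{id}+\mathrm{id}\otimes\theta_N$ on the nose, and that by naturality the intrinsic chain map $\theta_{\mathbb{F}}$ is a lift of $\theta_M$; since $\theta_M=0$ this lift is null-homotopic, so $\theta_{\mathbb{F}}\otimes\mathrm{id}$ induces zero on homology, and together with $\theta_N=0$ you get $\theta_T=0$. You have thus actually shown the stronger fact that $\Tor^R_\nu(M,N)$ is \emph{Eulerian} whenever $M$ and $N$ are. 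The paper proceeds quite differently: it works throughout with \emph{generalized} Eulerian modules, passes to $A_{2n}(K)$ via Bj\"ork's identification $\Tor^R_\nu(M,N)\cong H_\nu(Y_1-X_1,\ldots,Y_n-X_n;\,M\otimes_K N)$, checks that $M\otimes_K N$ is generalized Eulerian there, and then uses a linear change of variables together with Theorem~\ref{induct}(2) (partial Koszul homology preserves the generalized Eulerian property) to conclude. Your route is more elementary and, for local cohomology modules (Eulerian by Ma--Zhang), strictly sharper; the paper's detour buys the extension to arbitrary graded Lyubeznik functors in Theorem~\ref{main-second}, where the inputs are only known to be generalized Eulerian. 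Note also that your computation is at odds with the Remark following Theorem~\ref{tor-R}, which claims Eulerian inputs need not yield Eulerian $\Tor$: your coproduct argument shows they do.
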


In view of Theorems \ref{first} and \ref{second} a natural question is to investigate \\  $\Ext^\nu_{A_n(K)}(H^i_I(R), H^j_J(R))$ for $\nu \geq 0$.  We note that by Bjork  \cite[2.7.15  \& 1.6.6]{Bjork} this is a finite dimensional $K$-vector space. Although we are unable to resolve this question, we make the following:
\begin{conjecture}\label{conj}[with hypotheses as in \ref{setup}.] Fix $\nu \geq 0$. The graded $K$-vector space $\Ext^\nu_{A_n(K)}(M, N)$ is concentrated in degree zero, i.e., $\Ext^\nu_{A_n(K)}(M, N)_j = 0$  for $j \neq 0$.
\end{conjecture}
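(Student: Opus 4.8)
\medskip

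\noindent\textbf{A strategy toward Conjecture \ref{conj}.}
The plan is to convert the $\Ext$ into a $\Tor$, so that the mechanism behind Theorem \ref{first} becomes available. Since $A_n(K)$ has finite global dimension $n$, the module $M$ is a perfect complex over $A_n(K)$, and hence there is a natural graded isomorphism
\begin{equation*}
R\Hom_{A_n(K)}(M,N)\ \cong\ R\Hom_{A_n(K)}\bigl(M,A_n(K)\bigr)\otimes^{L}_{A_n(K)}N .
\end{equation*}
As $M$ is holonomic, $\Ext^k_{A_n(K)}(M,A_n(K))=0$ for $k\neq n$ (see \cite{Bjork}); writing $M^{*}:=\Ext^n_{A_n(K)}(M,A_n(K))$, a graded holonomic right $A_n(K)$-module, the displayed isomorphism collapses to graded isomorphisms $\Ext^\nu_{A_n(K)}(M,N)\cong\Tor^{A_n(K)}_{n-\nu}(M^{*},N)$ for all $\nu$. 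Thus Conjecture \ref{conj} is equivalent to the assertion that each $\Tor^{A_n(K)}_\mu(M^{*},N)$ is concentrated in a single internal degree, which moreover equals $0$. This is the formal analogue of Theorem \ref{first} with $M^\sharp$ replaced by $M^{*}$; and since $M^{*}\cong(\mathbb{D}M)^\sharp(-n)$, where $\mathbb{D}M$ is the holonomic dual of $M$ and the shift $-n$ is that of the graded canonical module $\omega_R=R(-n)$, such a shift is precisely what turns the answer ``degree $-n$'' of Theorem \ref{first} into ``degree $0$''.

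Granting this reduction, the next step is to show that $\mathbb{D}M=\mathbb{D}\bigl(H^i_I(R)\bigr)$ is again an \emph{Eulerian} graded $D$-module, i.e.\ that the Euler operator $\E=X_1\partial_1+\cdots+X_n\partial_n$ acts on each graded component of $\mathbb{D}M$ as multiplication by its degree. Once this is known, one reruns the argument of Theorem \ref{first}: the complex computing $\Tor^{A_n(K)}_\bullet\bigl((\mathbb{D}M)^\sharp,N\bigr)$ is, up to internal shifts, the de Rham--Koszul complex of $\partial_1,\dots,\partial_n$ acting on $\mathbb{D}M\otimes_R N$; since a tensor product over $R$ of Eulerian modules is Eulerian and $N=H^j_J(R)$ is Eulerian, the inputs are Eulerian, and the Cartan ``magic formula'' exhibits on the $p$-th term an operator of the shape $\E\pm p+c$ which acts on the internal degree-$d$ part as multiplication by $d$ and is simultaneously null-homotopic. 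Therefore the homology vanishes in every degree $d\neq 0$, which is the assertion of Conjecture \ref{conj}.

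The crux --- and the reason the statement is only a conjecture --- is the second step: it is not known whether $\mathbb{D}\bigl(H^i_I(R)\bigr)$ is Eulerian. Local cohomology modules of $R$ are Eulerian, and the class of Eulerian graded $D$-modules is closed under submodules, quotients, extensions and tensor products over $R$ (cf.\ \cite{MaZhang}); but closure under holonomic duality seems open --- the functor $\mathbb{D}(-)=\Ext^n_{A_n(K)}(-,A_n(K))$ involves a transpose that may a priori disturb the eigenvalues of $\E$ --- and $\mathbb{D}(H^i_I(R))$ is, in general, not itself a local cohomology module. One natural attempt is to choose an $\E$-equivariant graded free resolution of $M$ (available because $\E$ acts semisimply in characteristic zero) and to track the $\E$-action through $\Hom_{A_n(K)}(-,A_n(K))$; the obstacle is that $\E$ acts on the free terms by left multiplication, which is \emph{not} semisimple, so semisimplicity of $\E$ on the cohomology $M^{*}$ does not follow automatically. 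A second route, bypassing duality entirely, is to run the Euler-operator homotopy directly on $\Hom_{A_n(K)}(F_\bullet,N)$ for a graded free resolution $F_\bullet\to M$: this succeeds whenever $F_\bullet$ carries an exterior-algebra structure --- for instance $M={}^{l}R$ and $M=E(n)$, which already covers the basic supporting examples --- but leaves the general case open for want of a canonical resolution of that kind. We expect the Eulerian-duality question to be the decisive obstacle; indeed Conjecture \ref{conj} would follow in general from an affirmative answer to it.
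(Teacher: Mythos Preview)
Your reduction is exactly the one the paper carries out in Section~8: using Bj\"ork's graded isomorphism
\[
\Ext^\nu_{A_n(K)}(M,N)\ \cong\ \Tor^{A_n(K)}_{n-\nu}(M^{\dagger},N),
\]
one sees that Conjecture~\ref{conj} follows at once from Theorem~\ref{first-main} provided the left module ${}^{\sharp}M^{\dagger}(+n)$ is \emph{generalized Eulerian}. The paper records precisely this implication (see \S\ref{conj2-1}) and isolates the missing ingredient as Conjecture~\ref{dual}; it does not claim a proof of Conjecture~\ref{conj}.

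Two points in your write-up need correcting, and both concern the r\^ole of ``Eulerian'' versus ``generalized Eulerian.'' First, the class of Eulerian graded $D$-modules is \emph{not} closed under extensions: this is noted explicitly in the introduction (with a reference to \cite[3.5(1)]{MaZhang}) and is the very reason the paper works with generalized Eulerian modules throughout. Second, it is not true---or at least not shown---that a tensor product over $R$ of Eulerian modules is Eulerian; the proof of Theorem~\ref{tor-R} establishes only that $M\otimes_K N$ (and hence $\Tor^R_\nu(M,N)$) is \emph{generalized} Eulerian, and the paper flags in Remark~6.4 that Eulerianity is not preserved by that argument. So in your step ``rerun the argument of Theorem~\ref{first},'' the correct hypothesis to aim for is that $\mathbb{D}M$ be \emph{generalized} Eulerian, which is the content of Conjecture~\ref{dual}; asking for plain Eulerianity is both stronger than needed and, as you note, no easier to verify. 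With that adjustment your outline coincides with the paper's strategy, and your diagnosis of the obstacle---stability of the (generalized) Eulerian property under holonomic duality---is the same as the paper's.
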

We give evidence which support this conjecture.
\s\textit{Techniques used to prove our results} \\
 Ma and Zhang, \cite{MaZhang},  introduced the notion of Eulerian $D$-modules both in characteristic zero and in characteristic $p$ (here $D$ is the ring of differential operators on $K[X_1,\ldots, X_n])$. They showed that the local cohomology module $H^i_I(R)$ is Eulerian for all $i \geq 0$. Unfortunately however the class of Eulerian $D$-modules is not closed under extensions (see 3.5(1) in \cite{MaZhang}). 
To rectify this  the first author introduced the notion generalized Eulerian $D$-modules (in characteristic zero). We prove Theorem's \ref{first} and \ref{second}
in the general context of graded Lyubeznik functor's on $^*Mod(R)$ (see Section 2 for definition of graded Lyubeznik functor). 

In characteristic $p>0$, Ma and Zhang proved a considerably stronger result. 
They showed that every graded $F$-module is Eulerian (see Theorem 4.4 in \cite{MaZhang}). So if $\mathcal{T}$ is any graded Lyubeznik functor on $^*Mod(R)$ then $\mathcal{T}(R)$ is Eulerian. As a consequence, we have $H_\mathfrak{m}^i\mathcal{T}(R)\cong E(n)^{a_i}$ for some $a_i\geq 0$. A natural question is what happens when $\text{char} (K) = 0$. Note that in $\text{char}~ p > 0$, Ma and Zhang \cite{MaZhang} essentially use the Frobenius which is unavailable in characteristic zero.  It is perhaps hopeless to determine whether $\mathcal{T}(R)$ is Eulerian in characteristic zero. An essential obstruction is that 
Eulerian $D$-module are not closed under extensions. 

Our third result is
\begin{theorem}\label{third}[with hypotheses as in \ref{setup}.]
 Let $\mathcal{T}$ be a graded Lyubeznik functor on $^*Mod(R)$. Then  $\mathcal{T}(R)$ is a generalized Eulerian $A_n(K)$-module.  
\end{theorem}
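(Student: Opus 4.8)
The plan is to induct on the structure of the graded Lyubeznik functor, using only two stability properties of the class $\mathcal{C}$ of generalized Eulerian $A_n(K)$-modules: (i) $\mathcal{C}$ is closed under subquotients, finite direct sums and extensions, and (ii) $\mathcal{C}$ is closed under graded local cohomology $H^j_I(-)$ for homogeneous $I$. For (i): if $z$ is homogeneous of degree $j$ in a generalized Eulerian module, then $(\E-j)^a z = 0$ for some $a \geq 1$; this condition visibly restricts to graded submodules and lifts along graded quotients, and for $0 \to M' \to M \to M'' \to 0$ one first kills the image of $z$ in $M''$ by some power of $\E - j$, thereby landing in $M'$, and kills again — so $\mathcal{C}$ is closed under extensions, hence under arbitrary subquotients. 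Since $\E \cdot X^\alpha = (\deg X^\alpha)\, X^\alpha$, the ring $R$ itself is Eulerian, hence $R \in \mathcal{C}$; this is the base of the induction.

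The heart of the argument is (ii). I would compute $H^j_I(M)$, for $M \in \mathcal{C}$ and $I = (f_1,\ldots,f_s)$ with the $f_i$ homogeneous, as the cohomology of the \v{C}ech complex $\check{C}^\bullet$ on $f_1,\ldots,f_s$. Its terms are finite direct sums of localizations $M_{f_{i_1}\cdots f_{i_p}}$, and its differentials are homogeneous of degree $0$ and $A_n(K)$-linear (the structure map of a localization is $R$-linear and commutes with the $\partial_i$, which act on localizations by the quotient rule). So it suffices to show $M_f \in \mathcal{C}$ for homogeneous $f$, say of degree $d$, and then conclude for $H^j_I(M)$, a subquotient of $\check{C}^j$, by closure under subquotients. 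For this localization step, a direct computation gives $\E(w/f^k) = (\E w)/f^k - kd\,(w/f^k)$ for all $w \in M$ (using $\E f = df$), hence $(\E - g)(w/f^k) = \big((\E - (g+kd))w\big)/f^k$ and, iterating, $(\E - g)^a(w/f^k) = \big((\E - (g+kd))^a w\big)/f^k$. Since every homogeneous element of $M_f$ of degree $g$ is of the form $w/f^k$ with $w \in M$ homogeneous of degree $g+kd$, and $(\E-(g+kd))^a w = 0$ for suitable $a$, this finishes (ii).

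Finally I would treat the basic functors of kernel/image/cokernel type. Each natural transformation occurring in the definition of a graded Lyubeznik functor — the maps in the long exact sequences relating $H^\bullet_{Z'}(-)$, $H^\bullet_Z(-)$ and the local cohomology supported in the locally closed set $Z \setminus Z'$, for $Z' \subseteq Z$ closed — is, evaluated on a graded $A_n(K)$-module, a homomorphism of graded $A_n(K)$-modules of degree $0$; its source and target lie in $\mathcal{C}$ by (ii) (applied to whatever generalized Eulerian module has already been produced), so its kernel, image and cokernel are subquotients of modules in $\mathcal{C}$ and hence in $\mathcal{C}$. Writing $\mathcal{T} = \mathcal{T}_1 \circ \cdots \circ \mathcal{T}_m$ as a composite of basic functors and applying (ii) and this last observation successively, starting from $R \in \mathcal{C}$, yields $\mathcal{T}(R) \in \mathcal{C}$.

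The main obstacle I anticipate is not a computation but the set-up in the last step: pinning down precisely, in the graded category, which natural transformations are part of the definition of a graded Lyubeznik functor, and checking uniformly that all of them (including the connecting homomorphisms) are $A_n(K)$-linear and homogeneous of degree $0$, so that the kernel/image/cokernel operations genuinely stay within $\mathcal{C}$. The localization identity in (ii) is short, and the closure properties in (i) are routine.
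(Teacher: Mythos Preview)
Your proposal is correct and follows essentially the same route as the paper: closure of generalized Eulerian modules under subquotients and extensions (Property~\ref{property1}), closure under homogeneous localization via the identity $(\E_n-|z|+|f|)^a(z/f)=\tfrac{1}{f}(\E_n-|z|)^a z$ (Lemmas~\ref{property3}--3.6 and Corollary~\ref{property4}), hence closure under $H^i_I(-)$ via the \v{C}ech complex (Corollary~\ref{property5}), and then the long exact sequence for a locally closed $Y=Y''\setminus Y'$ together with induction on the composite $\mathcal{T}=\mathcal{T}_1\circ\cdots\circ\mathcal{T}_m$. Your only extra concern---that the maps in the long exact sequence, including the connecting homomorphisms, are $A_n(K)$-linear and homogeneous of degree~$0$---is a point the paper simply takes for granted.
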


Let  $H^\nu(\partial_r, \partial_{r+1}, \ldots, \partial_n, V)$ 
($H^\nu(X_r, \ldots, X_n, V)$) be the $\nu^{th}$ partial De Rham (and Koszul) cohomology of an $A_n(K)$-module $V$. Notice they are graded $A_n(K)$-modules if $V$ is a graded $A_n(K)$-module. The essential technique to prove Theorems \ref{first} and \ref{second} is the following result:
\begin{theorem}\label{induct}
 Let $V$ be a generalized Eulerian $A_n(K)$-module.
 Then for $r \geq 1$
 \begin{enumerate}[\rm (1)]
  \item 
  $H^\nu(\partial_r, \partial_{r+1}, \ldots, \partial_n ; V)(-n+ r - 1)$ is a generalized Eulerian module $A_{r-1}(K)$ for all $\nu \geq 0$.
  \item
  $H^\nu(x_r, \ldots, x_n ; V)$ is a generalized Eulerian $A_{r-1}(K)$ module for all $\nu \geq 0$.
  \item
  $H^\nu(x_1, \ldots, x_{r-1}, \partial_r, \partial_{r+1}, \ldots, \partial_n ; V)$ is a $K$-vector space concentrated in degree
  $-n + r - 1$
  \end{enumerate}
\end{theorem}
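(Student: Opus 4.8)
The plan is to isolate one "reduction step" and bootstrap it. Write $\mathcal{E}_m=\sum_{i=1}^m X_i\partial_i$, so a graded $A_m(K)$-module $W$ is generalized Eulerian exactly when for each $j$ some power $(\mathcal{E}_m-j)^t$ kills $W_j$, and record the two relations $\mathcal{E}_{m-1}=\mathcal{E}_m-X_m\partial_m$ and $X_m\partial_m=\partial_mX_m-1$. The first thing I would prove is the engine: if $W$ is a generalized Eulerian $A_m(K)$-module, then each of $\ker(\partial_m\colon W\to W)$, $\operatorname{coker}(\partial_m\colon W\to W)$, $\ker(X_m\colon W\to W)$, $\operatorname{coker}(X_m\colon W\to W)$, with the grading shift dictated by the relevant Koszul complex, is a generalized Eulerian $A_{m-1}(K)$-module. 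Since $X_1,\dots,X_{m-1},\partial_1,\dots,\partial_{m-1}$ commute with $X_m$ and $\partial_m$, these are genuine $A_{m-1}(K)$-modules, and the only content is the transfer of the nilpotency condition. Using $X_m\partial_m=\partial_mX_m-1$ one sees that $X_m\partial_m$ acts as $0$ on $\ker\partial_m$ and on $\operatorname{coker}X_m$ and as $-1$ on $\operatorname{coker}\partial_m$ and on $\ker X_m$; hence on each of the four modules $\mathcal{E}_{m-1}$ differs from $\mathcal{E}_m$ by a scalar $0$ or $1$, and after matching this against the internal degree shift of the Koszul complex, $(\mathcal{E}_m-j)^tW_j=0$ becomes the desired identity on the shifted module.

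Next I would prove parts (1) and (2) by descending induction on $n-r$, the case $r=n$ being the engine applied to $\partial_n$ (resp.\ $X_n$). For the inductive step, the Koszul complex on $\partial_r,\partial_{r+1},\dots,\partial_n$ is the mapping cone of $\partial_r$ acting on the Koszul complex of $\partial_{r+1},\dots,\partial_n$, which produces a long exact sequence exhibiting $H^\nu(\partial_r,\dots,\partial_n;V)$, up to shift, as an extension of a graded submodule of $\ker(\partial_r)$ by a graded quotient of $\operatorname{coker}(\partial_r)$, with these operators acting on $H^\bullet(\partial_{r+1},\dots,\partial_n;V)$. The induction hypothesis makes the latter generalized Eulerian over $A_r(K)$ after a shift; the engine (applied inside $A_r(K)$) turns its $\partial_r$-kernel and $\partial_r$-cokernel into generalized Eulerian $A_{r-1}(K)$-modules; and since generalized Eulerian modules are closed under graded submodules, quotients and extensions — which is precisely the point of the "generalized" notion — the extension is again generalized Eulerian over $A_{r-1}(K)$. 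Part (2) is the same argument with the $X_i$'s and the $X$-version of the engine.

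For part (3) I would use that $X_1,\dots,X_{r-1},\partial_r,\dots,\partial_n$ is a commuting family, so its Koszul complex is the total complex of the double complex assembled from the $X$-part and the $\partial$-part, giving a first-quadrant spectral sequence with $E_2$-term $H^p(X_1,\dots,X_{r-1};H^q(\partial_r,\dots,\partial_n;V))$ abutting to $H^{p+q}(X_1,\dots,X_{r-1},\partial_r,\dots,\partial_n;V)$; because $A_{r-1}(K)$ commutes with all of $\partial_r,\dots,\partial_n$, this is a spectral sequence of graded $A_{r-1}(K)$-modules. By part (1), $H^q(\partial_r,\dots,\partial_n;V)$ is, after the shift $-n+r-1$, generalized Eulerian over $A_{r-1}(K)$; applying part (2) with all of $X_1,\dots,X_{r-1}$, that is, reducing $A_{r-1}(K)$ down to $A_0(K)=K$, makes $H^p(X_1,\dots,X_{r-1};-)$ of it generalized Eulerian over $A_0(K)=K$. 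Since $\mathcal{E}_0=0$, a $K$-module is generalized Eulerian iff it is concentrated in a single degree; tracking the shift shows that degree is $-n+r-1$, so the whole $E_2$-page, hence $E_\infty$, hence (via the finite filtration it induces on the abutment) the module $H^\nu(X_1,\dots,X_{r-1},\partial_r,\dots,\partial_n;V)$ is concentrated in degree $-n+r-1$.

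The main obstacle I anticipate is not conceptual but the consistent bookkeeping of internal degree shifts: those coming from the Koszul/De Rham complexes, those from the mapping-cone long exact sequences, and the scalar corrections $0$ and $-1$ furnished by the engine must telescope to exactly $-n+r-1$ in (1), to $0$ in (2), and to $-n+r-1$ again in (3). A secondary point to pin down carefully is that every long exact sequence and spectral-sequence differential genuinely lives in graded $A_{r-1}(K)$- (resp.\ $A_r(K)$-) modules, so that closure of the generalized Eulerian class under subquotients and extensions may legitimately be invoked at each stage.
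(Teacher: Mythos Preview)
Your proposal is correct and follows essentially the same route as the paper: your ``engine'' is precisely Proposition~5.3 here (for the $X$-variable) together with its $\partial$-analogue (Proposition~3.3 of \cite{Puthenpurakal2}), and your descending induction for (1) and (2) via the mapping-cone long exact sequence is exactly how the paper proves Theorem~5.4, invoking the short exact sequence of Lemma~2.4 and closure under extensions (Property~3.3). The only visible difference is in (3): the paper reaches the conclusion by peeling off one operator at a time using Lemma~2.4 (combined with Proposition~5.5 at the end), whereas you package the same filtration into a first-quadrant Koszul spectral sequence; the two arguments are interchangeable and carry the same shift bookkeeping. One small wording fix: a graded $A_0(K)=K$-module is generalized Eulerian iff it is concentrated in degree~$0$ (not merely ``a single degree''), since $\mathcal{E}_0=0$ forces $(-j)^t W_j=0$; you use this correctly when you track the shift to $-n+r-1$.
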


Here is an overview of the contents of the paper. In section two we discuss a few preliminaries that we need. In the next section
we discuss some properties of generalized Eulerian modules. In particular we show that (homogeneous) localization of generalized 
Eulerian modules are generalized Eulerian. In section four we prove Theorem \ref{third} and give some easy corollaries of this 
result. In section five we investigate Koszul homology of genaralized Eulerian modules. An easy consequence of our result
is that if a generalized Eulerian $A_n(K)$-module $M$ is finitely generated as an $R$-module then it is either zero or 
$R^m$ for some $m \geq1 $. We also indicate a proof of Theorem \ref{induct}. In section six we prove Theorem \ref{second}.
In the next section we prove Theorem \ref{first}. Finally in section eight we give evidence to support Conjecture \ref{conj}.

\section{Preliminaries}
In this section, we discuss a few preliminary result that we need.
Let $K$ be a field (not necessarily of characteristic zero) and let $R=K[X_1, \ldots, X_n]$ with standard grading.
Let $\mathfrak{m}=(X_1, \ldots, X_n)$ and let $E$ be the injective hull of $R/\mathfrak{m}$. Let $ \ ^* Mod(R)$ denote the category 
of graded $R$-modules.

\s  \textbf{Graded Lyubeznik functors:} \\
In this subsection, we define graded Lyubeznik functors.  We say $Y$ is \textit{homogeneous }closed subset of $\text{Spec}(R)$ if $Y= V(f_1, \ldots, f_s)$, where $f_i's$ are homogeneous polynomials in $R$.

We say $Y$ is a homogeneous locally closed subset of $\text{Spec}(R)$ if $Y = Y''-Y'$, where $Y', Y''$ are
homogeneous closed subset of $\text{Spec}(R)$. We have an exact sequence of  functor's on $\ ^*Mod(R)$:
\begin{equation}\label{eq1} H_{Y'}^i(-) \longrightarrow H_{Y''}^i(-) \longrightarrow H_{Y}^i(-) \longrightarrow
H_{Y'}^{i+1}(-).
\end{equation}

\begin{definition}
\textit{A graded Lyubeznik functor} $\mathcal{T}$ is a composite functor of the form $\mathcal{T}= \mathcal{T}_1\circ\mathcal{T}_2 \circ \ldots\circ\mathcal{T}_k$, where each $\mathcal{T}_j$ is either $H_{Y_j}^i(R)$, where $Y_j$ is a homogeneous locally closed subset of $\text{Spec}(R)$ or the kernel of any arrow appearing in (\ref{eq1}) with $Y'=Y_j'$ and $Y''= Y_j''$, where $Y_j' \subset Y_j''$ are two homogeneous  closed subsets of $\text{Spec}(R)$.
\end{definition}

 \textit{We now assume that characteristic of $K$ is zero.}
 
  Let $A_n(K)$ be the $n^{th}$ Weyl algebra over $K$. Set $\deg \partial_i= -1$. So $A_n(K)$ is a graded ring with $R$ as a graded subring.
 \begin{remark}
Although many results are stated for de Rham cohomology of a
$A_n(K)$-module $M$, we will actually work with de Rham homology.
 Let $S = K[\partial_1,\ldots,\partial_n]$. Consider it as a subring of $A_n(K)$.
 Then note that $H_i(\bP; M)$ is the $i^{th}$ Koszul homology module of $M$ with respect to $\bP$. 
 More generally if $u_1,\ldots, u_l$ are commuting elements in $A_n(K)$ then we can consider the subring 
 $K[u_1,\ldots, u_l]$ of $A_n(K)$ and consider the Koszul homology 
 $H_i(u_1, \ldots, u_l; M)$ for all $i \geq 0$.
\end{remark}

\s \textbf{ Koszul homology:} 
Let $M$ be a graded $A_n(K)$-module (not necessarily finitely generated). Let $u_1,\ldots, u_c$ be homogeneous commuting 
elements in $A_n(K)$.
Let $H_i(u_1,\ldots, u_c; M)$ be the $i^{th}$ Koszul homology module of $M$  with respect to $u_1,\ldots, u_c $ (with its natural grading). The following result is well-known.
\begin{lemma}\label{exact}
Let $ \mathbf{u} = u_r,  u_{r+1}, \ldots, u_c$ and let $\mathbf{u^\prime}  = u_{r+1}, \ldots, u_c$. 
Let $M$ be a graded left $A_n(K)$-module. Consider the (commutative) subring 
 $S = K[u_1,\ldots, u_l]$ of $A_n(K)$.  For each $i\geq 0$ there exists an exact sequence of graded $S$-modules.

$$ 0 \rightarrow H_0(u_r; H_i(\mathbf{u^\prime}; M))\rightarrow H_i( \mathbf{u}; M) 
\rightarrow H_1(u_r; H_{i-1}(\mathbf{u^\prime} ; M)) \rightarrow 0.$$
\end{lemma}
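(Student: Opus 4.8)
The plan is to recognize this as the standard recursive short exact sequence for Koszul complexes and reduce to the commutative case. First I would note that the only structure used here is restriction of scalars along the inclusion of graded rings $S = K[u_1,\ldots,u_l] \hookrightarrow A_n(K)$: since $S$ is a commutative graded subring, the graded left $A_n(K)$-module $M$ is in particular a graded $S$-module, and $H_i(\mathbf{u};M)$, $H_i(\mathbf{u^\prime};M)$ are just the ordinary Koszul homology modules of the $S$-module $M$ with respect to the homogeneous elements $\mathbf{u}, \mathbf{u^\prime}$ of $S$. So it is enough to prove the statement over a commutative graded ring, which is classical; I would simply reproduce the standard argument with the grading tracked.

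Next, I would invoke the recursive structure of the Koszul complex. With $\mathbf{u} = u_r, \mathbf{u^\prime}$ there is a canonical isomorphism of complexes of graded $S$-modules $K_\bullet(\mathbf{u}; M) \cong \operatorname{Cone}\bigl(u_r \colon K_\bullet(\mathbf{u^\prime};M)(-\deg u_r) \to K_\bullet(\mathbf{u^\prime};M)\bigr)$ — equivalently $K_\bullet(\mathbf{u}) \cong K_\bullet(u_r)\otimes_S K_\bullet(\mathbf{u^\prime})$ — obtained by splitting off in the exterior algebra the generator dual to $u_r$; multiplication by $u_r$ is a chain endomorphism of $K_\bullet(\mathbf{u^\prime};M)$ precisely because $u_r$ is central in $S$. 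The mapping cone sits in the tautological short exact sequence of complexes of graded $S$-modules
$$0 \longrightarrow K_\bullet(\mathbf{u^\prime}; M) \longrightarrow K_\bullet(\mathbf{u}; M) \longrightarrow K_\bullet(\mathbf{u^\prime}; M)(-\deg u_r)[-1] \longrightarrow 0,$$
whose long exact homology sequence is
$$\cdots \to H_i(\mathbf{u^\prime};M) \to H_i(\mathbf{u};M) \to H_{i-1}(\mathbf{u^\prime};M)(-\deg u_r) \xrightarrow{\ \pm u_r\ } H_{i-1}(\mathbf{u^\prime};M) \to \cdots,$$
the connecting homomorphism being, up to sign, multiplication by $u_r$ (the standard fact for the cone of a multiplication map).

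Finally I would split this long exact sequence at the maps ``multiplication by $u_r$'' to get, for each $i \ge 0$, a short exact sequence
$$0 \to \operatorname{coker}\bigl(u_r \colon H_i(\mathbf{u^\prime};M)(-\deg u_r) \to H_i(\mathbf{u^\prime};M)\bigr) \to H_i(\mathbf{u};M) \to \ker\bigl(u_r \colon H_{i-1}(\mathbf{u^\prime};M)(-\deg u_r) \to H_{i-1}(\mathbf{u^\prime};M)\bigr) \to 0,$$
and then identify the outer terms: for any graded $S$-module $L$ one has $H_0(u_r;L) = \operatorname{coker}(u_r\colon L(-\deg u_r)\to L) = L/u_r L$ and $H_1(u_r;L) = \ker(u_r\colon L(-\deg u_r)\to L)$, by the very definition of the Koszul complex on the single element $u_r$; taking $L = H_i(\mathbf{u^\prime};M)$ and $L = H_{i-1}(\mathbf{u^\prime};M)$ turns the display into exactly the asserted sequence, with all arrows homogeneous of degree $0$. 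Since the internal twist $(-\deg u_r)$ appears in the same way on both sides (in the cone and in $H_\bullet(u_r;-)$), everything matches on the nose, so there is no real obstacle here; the only thing to be careful about is consistency of the graded Koszul conventions (the degree-one term of $K_\bullet(\mathbf{u};M)$ being $\bigoplus_j M(-\deg u_j)$). Equivalently, one could read off the same short exact sequence as the edge sequence of the two-column spectral sequence of the double complex $K_\bullet(u_r)\otimes_S K_\bullet(\mathbf{u^\prime};M)$, which collapses at $E^2$ since it has only two columns.
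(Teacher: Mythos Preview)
Your argument is correct: this is precisely the standard mapping-cone/long-exact-sequence proof for the recursive behaviour of Koszul homology, and you have tracked the grading consistently. The paper itself gives no proof of this lemma, simply declaring it well-known, so there is nothing further to compare.
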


\s \label{left-right} \textit{ Standard method for converting left $A_n(K)$-modules into right modules and vice-versa:}
Consider the anti-isomorphism $\tau \colon A_n(K) \rt A_n(K)$ where $\tau(h\partial^\alpha) = (-1)^{|\alpha|}\partial^\alpha h$
(here $h \in R)$. Notice $\tau^2 = 1$ and $\tau(uv) = \tau(v)\tau(u)$ for all $u, v \in A_n(K)$. 

Let $M$ be a left $A_n(K)$-module, Consider $M^\sharp $ which is $M$ as an abelian group. However $M^\sharp$ is given a right $A_n(K)$-module structure
as follows:
\[
 m\circ u = \tau(u)m \quad \text{for} \ m \in M \ \text{and} \ u \in A_n(K).
\]
Similarly if $N$ is  right $A_n(K)$-module then using $\tau$ we can naturally define a left $A_n(K)$-module $^\sharp N$.  If $M$ is a left $A_n(K)$-module then
it is trivial to check that $^\sharp(M^\sharp) \cong M$.

Finally if $M = \bigoplus_{i \in \ZZ }M_i$ is a graded left $A_n(K)$-module then
$M^\sharp$ is also graded with $M^\sharp = \bigoplus_{i \in \ZZ }M_i$.
 
\textit{Example:} Let $ ^lR = A_n(K)/A_n(K)\partial$ be $R$ considered as a left $A_n(K)$-module. Also let $R^r = A_n(K)/\partial A_n(K)$
be $R$ considered as a right $A_n(K)$-module. It is not difficult to prove that $^lR^\sharp \cong R^r$ as right $A_n(K)$-modules.

\section{generalized Eulerian $A_n(K)$-modules}

In this section, we recall the notion of generalized Eulerian $A_n(K)$-modules from \cite{Puthenpurakal2}. The main result of this section is that  localization of graded generalized Eulerian $A_n(K)$-modules (with respect to a multiplicatively closed set of homogeneous elements) are generalized Eulerian.

Let $K$ be a field of characteristic zero and  let $R=K[X_1, \ldots, X_n]$  with standard grading. Let $A_n(K)$ be the $n^{th}$ Weyl algebra over $K$. Set $\deg \partial_i= -1$. So $A_n(K)$ is a graded ring with $R$ as a graded subring.
The Euler operator, denoted by $\mathcal{E}_n$, is defined as 
$$\E_{X,n} = \mathcal{E}_n  := \sum_{i=1}^n X_i\partial_i. $$

Note that $\deg \mathcal{E}_n=0$. Let $M$ be a graded $A_n(K)$-module. If $m\in M$ is homogeneous element, set $|m|= \deg m$.
\begin{definition}
 Let $M$ be a graded $A_n(K)$-module. We say $M$ is an \textit{Eulerian} $A_n(K)$-module if for any homogeneous $z$ in $M$
 we have
$$ \mathcal{E}_nz= |z|\cdot z.$$
\end{definition}
Clearly $R$ is an Eulerain $A_n(K)$-module.
\begin{definition}
A graded $A_n(K)$-module $M$ is said to be \textit{generalized Eulerian} if for any homogeneous element $z$ of $M$
there exists a positive integer $a$ (depending on $z$) such that
$$ (\mathcal{E}_n- {|z|})^a \cdot z = 0.$$
\end{definition}
The following properties of generalized Eulerian modules were proved in \cite{Puthenpurakal2}. 

\begin{property}[Proposition 2.1 in \cite{Puthenpurakal2}] \label{property1} Let $0 \rightarrow M_1 \overset{{\alpha_1}}{\rightarrow} M_2 \overset{{\alpha_2}}{\rightarrow} M_3 {\rightarrow} 0 $ be a short exact sequence of graded $A_n(K)$-modules. Then $M_2$ is generalized Eulerian
 if and only if $M_1$ and $M_3$ are generalized Eulerian.

\end{property}

If $M$ is graded $A_n(K)$-module, then for $l\in \mathbb{Z}$ the modules $M(l)$ denotes the shift of $M$ by $l$; that is, $M(l)_n=M_{n+l}$ for all $n\in \mathbb{Z}.$
\begin{property}[Proposition 2.2 in \cite{Puthenpurakal2}]\label{property2}
Let $M$ be a non-zero generalized Eulerian $A_n(K)$-module. Then the shifted module $M(l)$ is not a generalized Eulerian $A_n(K)$-module for $l \neq 0$.
\end{property}
The goal of this section is to prove that localization of a generalized Eulerian $A_n(K)$-module is generalized Eulerian. We first prove: 

\begin{lemma}\label{property3}
Let $M$ be a graded $A_n(K)$-module. Consider a homogeneous polynomial $f \in R$ and let $w$ be a homogeneous element of $M$. Then
$$ (\mathcal{E}_n- |w| + |f|)\frac{w}{f} = \frac{1}{f}(\mathcal{E}_n- |w|)w.$$
\end{lemma}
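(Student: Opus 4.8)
The plan is to unwind the $A_n(K)$-module structure on the localization $M_f$ and then compute directly. Recall that $M_f = M[f^{-1}]$ is naturally a graded $A_n(K)$-module: the $R$-action is the obvious one, and the action of each $\partial_i$ is the unique one compatible with it for which the localization map $M \to M_f$ is $A_n(K)$-linear. Concretely, applying $\partial_i$ to the relation $f\cdot \frac{w}{f} = w$ in $M_f$ and using the Leibniz rule $\partial_i(f\cdot z) = \partial_i(f)\cdot z + f\cdot \partial_i(z)$ forces
\[
\partial_i \cdot \frac{w}{f} \;=\; \frac{\partial_i w}{f} \;-\; \frac{\partial_i(f)\, w}{f^2}.
\]

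With this in hand the proof is a short calculation. First I would multiply the displayed identity by $X_i$ and sum over $i = 1,\dots,n$, obtaining
\[
\mathcal{E}_n\cdot \frac{w}{f} \;=\; \frac{1}{f}\,\mathcal{E}_n(w) \;-\; \frac{\mathcal{E}_n(f)\, w}{f^2}.
\]
Since $f \in R$ is homogeneous of degree $|f|$ and $R$ is an Eulerian $A_n(K)$-module, $\mathcal{E}_n(f) = |f|\, f$; substituting this and using that $\frac{f w}{f^2} = \frac{w}{f}$ in $M_f$ gives
\[
\mathcal{E}_n\cdot \frac{w}{f} \;=\; \frac{1}{f}\,\mathcal{E}_n(w) \;-\; |f|\cdot \frac{w}{f}.
\]
Finally, subtracting $|w|\cdot \frac{w}{f}$ from both sides and regrouping the right-hand side as $\frac{1}{f}\bigl(\mathcal{E}_n - |w|\bigr)w$ yields exactly the asserted equality $\bigl(\mathcal{E}_n - |w| + |f|\bigr)\frac{w}{f} = \frac{1}{f}\bigl(\mathcal{E}_n - |w|\bigr)w$.

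The argument is entirely routine; the only step requiring any attention is the justification of the quotient rule for the $\partial_i$ on $M_f$, namely the standard fact that the localization of a graded $A_n(K)$-module at a homogeneous element is again a graded $A_n(K)$-module, the $\partial_i$-action being forced by $R$-linearity of the localization map as indicated above. So I anticipate no real obstacle; the lemma is essentially the statement that the operator $\mathcal{E}_n - (\deg)$ is compatible with homogeneous localization, and this identity is precisely what makes the localization results of this section go through.
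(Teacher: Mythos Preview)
Your proof is correct and follows essentially the same route as the paper's: both apply the quotient rule $\partial_i(w/f) = (f\partial_i w - \partial_i(f)\,w)/f^2$, multiply by $X_i$ and sum to compute $\mathcal{E}_n(w/f)$, invoke $\mathcal{E}_n(f) = |f|\,f$, and finish by elementary rearrangement. The only cosmetic difference is that you take a moment to justify the quotient rule on $M_f$ via the Leibniz identity applied to $f\cdot(w/f)=w$, which the paper simply uses without comment.
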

\begin{proof} First we compute
$$\begin{array}{lll}
\mathcal{E}_n\dfrac{w}{f} &=& \sum x_i\partial_i(\dfrac{w}{f}),\\
&=& \sum x_i \bigg{(}\dfrac{f\partial_i(w)- \partial_i(f)w }{f^2} \bigg{)},\\
&=& \frac{f}{f^2}(\sum x_i\partial_i(w))- \dfrac{1}{f^2}(\sum x_i\partial_i(f))w,\\
&=& \dfrac{1}{f}\mathcal{E}_nw - \dfrac{1}{f^2}(|f|\cdot f) w,\\
&=& \dfrac{1}{f}(\mathcal{E}_n-|f|)w.
\end{array}$$
Now we have
$$\begin{array}{lll}
(\mathcal{E}_n- |w| + |f|)\frac{w}{f}&=&  \dfrac{1}{f}(\mathcal{E}_n-|f|)w - (|w| - |f|)\dfrac{w}{f},\\
&=& \frac{1}{f}\mathcal{E}_nw- \frac{|w|}{f}w,\\
&=& \frac{1}{f}(\mathcal{E}_n- |w|)w.
\end{array}$$
This finishes the proof.
\end{proof}

We now extend the previous result:

\begin{lemma}
Let $M$ be a graded $A_n(K)$-module. Consider a homogeneous polynomial $f \in R$ and let $z$ be a homogeneous element of $M$. Then
$$ (\mathcal{E}_n- |z| + |f|)^n\frac{z}{f} = \frac{1}{f}(\mathcal{E}_n- |z|)^nz, ~~~~~ \text{for all}~~~~~ n\geq 1. $$

\end{lemma}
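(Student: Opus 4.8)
The plan is to prove this by induction on $n \geq 1$, using the preceding lemma (Lemma \ref{property3}) as the base case. I would set up the induction so that the computational burden is reduced to a single commutation identity.

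\textbf{Base case.} For $n=1$ the statement is exactly Lemma \ref{property3}, applied with $w = z$, so nothing is needed.

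\textbf{Inductive step.} Assume the identity holds for some $n \geq 1$; I want it for $n+1$. Write
\[
(\mathcal{E}_n - |z| + |f|)^{n+1}\frac{z}{f} = (\mathcal{E}_n - |z| + |f|)\left[(\mathcal{E}_n - |z| + |f|)^{n}\frac{z}{f}\right] = (\mathcal{E}_n - |z| + |f|)\left[\frac{1}{f}(\mathcal{E}_n - |z|)^{n}z\right],
\]
using the induction hypothesis. Now set $w := (\mathcal{E}_n - |z|)^n z$. The key point is to understand the degree of $w$: since $\mathcal{E}_n$ has degree $0$, the element $(\mathcal{E}_n - |z|)^n z$ is homogeneous of degree $|z|$ (it lies in $M_{|z|}$, or is zero, in which case the identity is trivial). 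Hence $|w| = |z|$, and applying Lemma \ref{property3} to $w$ gives
\[
(\mathcal{E}_n - |w| + |f|)\frac{w}{f} = \frac{1}{f}(\mathcal{E}_n - |w|)w, \quad\text{i.e.}\quad (\mathcal{E}_n - |z| + |f|)\frac{w}{f} = \frac{1}{f}(\mathcal{E}_n - |z|)w = \frac{1}{f}(\mathcal{E}_n - |z|)^{n+1}z.
\]
Combining the two displays yields the claim for $n+1$, completing the induction.

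\textbf{Main obstacle.} The only genuinely delicate point is the bookkeeping of degrees: one must be careful that $(\mathcal{E}_n - |z|)^n z$ really is homogeneous of the same degree $|z|$ so that Lemma \ref{property3} applies with the correct shift $|w| = |z|$ (and that the degenerate case $(\mathcal{E}_n-|z|)^nz = 0$ is harmless). Once this is observed, the inductive step is a direct one-line reduction to the $n=1$ lemma with no further computation; in particular the awkward-looking power $n$ in the exponent matches the index in the Weyl algebra $A_n(K)$ only notationally, and the argument works verbatim for any exponent.
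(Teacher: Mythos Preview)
Your proof is correct and follows essentially the same approach as the paper: induction on the exponent, with the base case given by Lemma~\ref{property3}, and the inductive step carried out by setting $w = (\mathcal{E}_n - |z|)^n z$, observing $|w| = |z|$ since $\mathcal{E}_n - |z|$ has degree $0$, and then applying Lemma~\ref{property3} once more to $w/f$. The paper's argument is line-for-line the same, including the key degree observation.
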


\begin{proof}
We prove this result by induction on $n$. By Lemma \ref{property3}, the result is true for $n=1$. Now we assume for $n=r$, so we have
$$ (\mathcal{E}_n- |z| + |f|)^r\frac{z}{f} = \frac{1}{f}(\mathcal{E}_n- |z|)^rz,$$
{for all} $r\geq 1$. Put $w= (\mathcal{E}_n-|z|)^rz$. We observe that $\mathcal{E}_n- |z|$ is homogeneous of degree $0$. This implies that $$\text{deg}(\mathcal{E}_n- |z|)^r=0. $$ Hence $$|w| = |z|.$$ 
Now we have 
$$\begin{array}{lll}
(\mathcal{E}_n- |z| + |f|)^{r+1}\frac{z}{f} &=& (\mathcal{E}_n- |z| + |f|)(\mathcal{E}_n- |z| + |f |)^{r}\frac{z}{f},\\
&=& (\mathcal{E}_n- |z|+ |f|)\frac{w}{f},\\
&=& (\mathcal{E}_n-|w| + |f|)\dfrac{w}{f},\\
&=& \frac{1}{f}(\mathcal{E}_n- |w|)w \hspace{2cm}(\text{{by}} \text{ Lemma}~~ \ref{property3}),\\
&=& \frac{1}{f}(\mathcal{E}_n- |z|) (\mathcal{E}_n- |z |)^rz,\\
&=& \frac{1}{f}(\mathcal{E}_n- | z |)^{r+1}z. 
\end{array}$$
So the result holds by induction.

\end{proof}

\begin{corollary}\label{property4}
Let $M$ be a  generalized Eulerian $A_n(K)$-module. Then $S^{-1}M$ is also a generalized Eulerian $A_n(K)$-module for each homogeneous system $S\subseteq R$. In particular, $M_f$ is generalized Eulerian for each homogeneous polynomial $f \in R$.
\end{corollary}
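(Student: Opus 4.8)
The plan is to reduce the statement to the special case of inverting a single homogeneous polynomial, and then apply the immediately preceding lemma (the one asserting $(\mathcal{E}_n - |z| + |f|)^n \tfrac{z}{f} = \tfrac{1}{f}(\mathcal{E}_n - |z|)^n z$). The two reductions needed are: (i) any localization $S^{-1}M$ at a multiplicatively closed set $S$ of homogeneous elements is a filtered colimit of localizations $M_f$ at single homogeneous $f$, and generalized Eulerian-ness passes to arbitrary colimits (since each homogeneous element of $S^{-1}M$ already lives in some $M_f$); and (ii) for a single $f$, checking the defining condition on homogeneous elements of $M_f$. So the real content is step (ii).

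For step (ii), I would take a homogeneous element of $M_f$; it has the form $w/f^k$ for some homogeneous $w \in M$ and some $k \geq 0$. Writing $g = f^k$, which is again a homogeneous polynomial in $R$, this element is $w/g$ and its degree is $|w/g| = |w| - |g|$. By the preceding lemma applied with $g$ in place of $f$ and with exponent $n$ (the number of variables — here we just need \emph{some} exponent), we get
\[
\bigl(\mathcal{E}_n - (|w| - |g|)\bigr)^n \frac{w}{g} = \bigl(\mathcal{E}_n - |w| + |g|\bigr)^n \frac{w}{g} = \frac{1}{g}(\mathcal{E}_n - |w|)^n w.
\]
Since $M$ is generalized Eulerian, there is a positive integer $a$ with $(\mathcal{E}_n - |w|)^a w = 0$; enlarging the exponent if necessary (the lemma as stated is for a fixed power, but it trivially extends to any power $\geq 1$ by the same induction, or one simply applies it with the exponent $\max(a,1)$ and notes the lemma's proof only used $n \geq 1$), we obtain $(\mathcal{E}_n - |w/g|)^a (w/g) = \tfrac{1}{g}(\mathcal{E}_n - |w|)^a w = 0$. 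Hence every homogeneous element of $M_f$ is killed by a power of $\mathcal{E}_n - (\text{its degree})$, which is exactly the generalized Eulerian condition.

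The only mildly delicate point is bookkeeping in step (i): one must check that the $A_n(K)$-module structure on $S^{-1}M$ is the one for which the lemma's computation of $\mathcal{E}_n$ acting on fractions is valid, i.e. that localization at homogeneous elements of $R$ is compatible with the $\partial_i$-action via the quotient rule — but this is precisely what was verified inside the proof of Lemma~\ref{property3}, so it is already available. I do not expect a genuine obstacle here; the corollary is essentially a formal consequence of the two lemmas just proved, and the proof is a half-page at most.
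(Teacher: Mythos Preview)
Your proposal is correct and follows essentially the same route as the paper: apply the preceding lemma (which is stated for \emph{all} exponents $\geq 1$, so there is no need to ``enlarge'' anything) to the fraction $z/f$ to get $(\mathcal{E}_n - |z/f|)^a(z/f) = \tfrac{1}{f}(\mathcal{E}_n - |z|)^a z = 0$. Your reduction step (i) is unnecessary, though --- since every homogeneous element of $S^{-1}M$ already has the form $z/f$ with $f \in S$ homogeneous, the lemma applies directly for arbitrary homogeneous $S$, and the paper simply does this in one line without passing through $M_f$ or colimits.
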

\begin{proof}
Let $z\in M$ be homogeneous and let $f\in S$. Put $\xi= \dfrac{z}{f}$, so $|\xi| = |z| - |f|$. Since $M$ is generalized Eulerian, so 
$(\mathcal{E}_n- |z|)^az=0~~~~~~~~ \text{for some}~~~ a\geq 1. $ Now we have
$$\begin{array}{lll}
(\mathcal{E}_n- |z| + |f|)^a\dfrac{z}{f} &=&\dfrac{1}{f}(\mathcal{E}_n- |z| )^az,\\
&=& \dfrac{1}{f}\cdot0,\\
&=&0.
\end{array}$$
This implies that $$(\mathcal{E}_n- | \xi |)^a\xi=0.$$ 
Thus $S^{-1}M$ is generalized Eulerian $A_n(K)$-module.
\end{proof}

\begin{corollary}\label{property5}
Let $I= (f_{1}, \ldots, f_{s})$ be a homogeneous ideal in $R$ with $f_i'$s are  homogeneous polynomials in $R$. Let $M$ be a generalized Eulerian $A_n(K)$-module. Then $H_I^i(M)$ is a generalized Eulerian $A_n(K)$-module for all $i \geq 0$.
\end{corollary}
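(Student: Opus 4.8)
\emph{Proof proposal.} The plan is to realize $H^i_I(M)$ as the cohomology of the extended \v{C}ech complex on $f_1,\dots,f_s$, observe that every term of this complex is a finite direct sum of localizations of $M$ at products of the $f_j$, invoke Corollary \ref{property4} to see that each term is generalized Eulerian, and then invoke Property \ref{property1} to conclude that the subquotients $H^i_I(M)$ are generalized Eulerian.

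First recall that for a homogeneous $f \in R$ the localization $M_f$ is naturally a graded left $A_n(K)$-module: the subring $R_f$ acts as usual and each $\partial_i$ acts by the quotient rule (this is exactly the computation underlying Lemma \ref{property3}), while $\deg(m/f^k) = |m| - k|f|$, so that $1/f$ sits in degree $-|f|$ and the natural map $M \rt M_f$ is a degree-preserving $A_n(K)$-homomorphism. More generally, for any product $g = f_{j_1}\cdots f_{j_p}$ we obtain a graded left $A_n(K)$-module $M_g$, and the canonical maps between these localizations are degree-preserving $A_n(K)$-linear maps. Hence the extended \v{C}ech complex $\check{C}^\bullet = \check{C}^\bullet(f_1,\dots,f_s;M)$, with $\check{C}^0 = M$ and $\check{C}^p = \bigoplus_{1 \leq j_1 < \cdots < j_p \leq s} M_{f_{j_1}\cdots f_{j_p}}$ for $p \geq 1$ and the usual signed differentials, is a complex in the category of graded left $A_n(K)$-modules with degree-zero maps, and it is standard that $H^p(\check{C}^\bullet) \cong H^p_I(M)$ as graded $A_n(K)$-modules.

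Now each $\check{C}^p$ is a finite direct sum of modules of the form $M_g$, each of which is generalized Eulerian by Corollary \ref{property4}; a finite direct sum of generalized Eulerian modules is again generalized Eulerian (immediate from the definition, taking the largest of the exponents $a$ occurring in the finitely many summands, or by repeated application of Property \ref{property1}). Since $H^p_I(M)$ is the subquotient $\ker(\check{C}^p \rt \check{C}^{p+1})/\image(\check{C}^{p-1} \rt \check{C}^p)$ of the generalized Eulerian module $\check{C}^p$, applying Property \ref{property1} first to the inclusion $\ker d^p \hookrightarrow \check{C}^p$ and then to the surjection $\ker d^p \twoheadrightarrow H^p_I(M)$ shows that $H^p_I(M)$ is generalized Eulerian, for every $p \geq 0$.

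The only points that require care are the grading bookkeeping — that $1/f$ lies in degree $-|f|$, so that the \v{C}ech differentials are homogeneous of degree $0$ — and that the $A_n(K)$-module structures on the localizations are the standard ones, compatible with the isomorphism $H^p(\check{C}^\bullet) \cong H^p_I(M)$ as $A_n(K)$-modules. Both are routine (the first is essentially Lemma \ref{property3}), so I do not expect any serious obstacle here; the substance of the argument is entirely carried by Corollary \ref{property4} and Property \ref{property1}.
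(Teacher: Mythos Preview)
Your proposal is correct and follows essentially the same route as the paper: compute $H^i_I(M)$ via the \v{C}ech complex on $f_1,\dots,f_s$, use Corollary~\ref{property4} to see each term is generalized Eulerian, and then apply Property~\ref{property1} to pass to cohomology. The paper's proof is more terse, but the argument is the same; your additional remarks on the grading and the $A_n(K)$-structure of the localizations simply make explicit what the paper takes for granted.
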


\begin{proof}
Let $C.$ be the {$\check{C}e$ch} complex of $M$ with respect to $f_1, \ldots, f_s$. 
This is a complex of graded $A_n(K)$-modules. By Corollary \ref{property4}, $M_{f_{1}, \ldots, f_{s}}$ 
is generalized Eulerian. So each module $C^j$ in $C.$ is generalized Eulerian. 
So by Property \ref{property1}, $H_I^i(M)$ is  a generalized Eulerian $A_n(K)$-module for all $i \geq 0$.

\end{proof}
Our final result is that the Eulerian operator is stable under a linear change of variables.

\s \label{change of variables} For $i = 1,\ldots,n$ let $Y_i = \sum_{ j = 1}^{n}b_{ij}X_j$ be a linear change of variables. Set $B = (b_{ij})$. Furthermore
set $\X = (X_1,\ldots, X_n)^{tr}$ and $\Y = (Y_1, \ldots, Y_n)^{tr}$. Set 
\[
\partial_X = (\partial/\partial X_1, \cdots, \partial/\partial X_n)^{tr} \quad \text{and} \quad \partial_Y = (\partial/\partial Y_1, \cdots, \partial/\partial Y_n)^{tr}. 
\]
Let $\E_X =  \X^{tr}\partial_X$ be the Eulerian operator \wrt \ $\X$ and let
 $\E_Y$ be the Eulerian operator \wrt \ $\Y$.  We have
\begin{proposition}\label{change-equal}
(with hypotheses as in \ref{change of variables})
$$\E_X = \E_Y.$$
\end{proposition}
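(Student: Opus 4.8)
The plan is to reduce the claimed operator identity $\E_X = \E_Y$ to the chain rule for partial derivatives under a linear change of variables. The key point is that $\E_X$ and $\E_Y$ are both elements of the Weyl algebra $A_n(K)$, so it suffices to compare them as operators acting on $R = K[X_1,\ldots,X_n]$; since $R$ is a faithful $A_n(K)$-module (indeed $A_n(K) \hookrightarrow \End_K(R)$ when $\operatorname{char} K = 0$), an equality of differential operators can be checked by evaluating on all of $R$, or even just on the generators $X_1,\ldots,X_n$ together with a density/degree argument. Concretely, I would first express $\partial_Y$ in terms of $\partial_X$. Since $Y_i = \sum_j b_{ij} X_j$, i.e. $\Y = B\X$, assuming $B$ invertible we have $\X = B^{-1}\Y$, and the chain rule gives $\partial/\partial Y_i = \sum_k (B^{-1})_{ki}\, \partial/\partial X_k$; in matrix form $\partial_Y = (B^{-1})^{tr}\partial_X$, equivalently $\partial_X = B^{tr}\partial_Y$.

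Next I would simply compute:
\[
\E_Y = \Y^{tr}\partial_Y = (B\X)^{tr}\big((B^{-1})^{tr}\partial_X\big) = \X^{tr} B^{tr} (B^{-1})^{tr}\partial_X = \X^{tr}(B^{-1}B)^{tr}\partial_X = \X^{tr}\partial_X = \E_X.
\]
This is the whole computation; it is essentially a one-line matrix manipulation once the chain-rule formula $\partial_Y = (B^{-1})^{tr}\partial_X$ is in hand. One should be slightly careful about the order of multiplication since we are inside the noncommutative ring $A_n(K)$: the entries of $\X$ commute among themselves and the entries of $\partial_X$ commute among themselves, and although $X_j$ and $\partial_k$ do not commute, the rearrangements above only collect scalar coefficients $b_{ij}, (B^{-1})_{ki}$ from $K$ past each other, so no commutator terms are introduced. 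It is worth spelling out that $\sum_i Y_i (\partial/\partial Y_i)$, when expanded, regroups to $\sum_{j,k}\big(\sum_i b_{ij}(B^{-1})_{ki}\big)X_j \partial/\partial X_k = \sum_{j,k}\delta_{jk} X_j\partial/\partial X_k = \sum_k X_k \partial/\partial X_k$, which is manifestly $\E_X$.

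The main (and only real) obstacle is justifying the chain-rule identity $\partial/\partial Y_i = \sum_k (B^{-1})_{ki}\,\partial/\partial X_k$ as an identity of elements of $A_n(K)$, not merely of classical differential operators on functions. The cleanest route is: $\{\partial/\partial Y_i\}$ are by definition the unique $K$-derivations of $R$ dual to the coordinate system $Y_1,\ldots,Y_n$, i.e. $\partial/\partial Y_i (Y_j) = \delta_{ij}$ and they kill $K$; one checks that $\sum_k (B^{-1})_{ki}\,\partial/\partial X_k$ is a $K$-derivation of $R$ sending $Y_j = \sum_\ell b_{j\ell}X_\ell$ to $\sum_k (B^{-1})_{ki} b_{jk} = \delta_{ij}$, hence equals $\partial/\partial Y_i$ by uniqueness. (Here one uses that a $K$-derivation of a polynomial ring is determined by its values on the variables, so equivalently on any linear coordinate system.) This requires $B \in GL_n(K)$, which is implicit in calling $Y_1,\ldots,Y_n$ a "change of variables"; I would state that hypothesis explicitly. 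With this lemma the proposition follows immediately from the displayed matrix computation above.
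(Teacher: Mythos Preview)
Your proof is correct and follows essentially the same approach as the paper: both establish the chain-rule relation $\partial_X = B^{tr}\partial_Y$ (equivalently $\partial_Y = (B^{-1})^{tr}\partial_X$) and then reduce the identity to the one-line matrix computation $\X^{tr}\partial_X = \Y^{tr}(B^{-1})^{tr}B^{tr}\partial_Y = \Y^{tr}\partial_Y$. The paper simply asserts the chain-rule relation as ``easily verified'' and omits your careful remarks about noncommutativity and faithfulness, so your write-up is in fact more detailed than the original.
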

\begin{proof}
It can be easily verified that $\partial_X = B^{tr}\partial_Y$.
Now notice
\begin{align*}
\E_X &= \X^{tr} \partial_X \\
&= \Y^{tr} (B^{-1})^{tr}B^{tr}\partial_Y \\
 &= \Y^{tr} \partial_Y \\
 &= \E_Y.
\end{align*}
\end{proof}

\section{proof of Theorem \ref{third}}
In this section we give a proof of Theorem \ref{third}. We also give a few applications.

\begin{proof}[Proof of Theorem \ref{third}]
It suffices to show that if $\mathcal{T}$ is a graded Lyubeznik functor
and $M$ is graded generalized Eulerian $A_n(K)$-modules, then so is $\mathcal{T}(M)$. 
Since $\mathcal{T}= \mathcal{T}_1\circ \mathcal{T}_2 \circ \ldots \circ \mathcal{T}_s$, 
by induction it is enough to show that $\mathcal{T}_i(M)$ is a generalized Eulerian $A_n(K)$-module.
But $\mathcal{T}_i(M)$ is a graded $A_n(K)$-submodule of $H_Y^i(M)$, where $Y$ is locally closed homogeneous
closed subset of $\text{Spec}(R)$. 
By Property \ref{property1}, it  suffices to show that $H_Y^i(M)$ is generalized Eulerian. 
Let $Y= Y''-Y'$, where $Y', Y''$ are homogeneous closed sets of $\text{Spec}(R)$. 
Then we have an exact sequence of graded $A_n(K)$-modules
\begin{equation} 
H_{Y'}^i(M) \longrightarrow H_{Y}^i(M) \longrightarrow H_{Y''}^{i+1}(M). 
\end{equation}
By Corollary \ref{property5}, $H_{Y'}^i(M)$ and $H_{Y''}^{i+1}(M)$ are generalized Eulerian $A_n(K)$-modules. Thus by Property \ref{property1}, $H_Y^i(M)$ is generalized Eulerian $A_n(K)$-module.
 
\end{proof}

\begin{remark}
Let $E$ be $^*$injective hull of $R/\mathfrak{m}$.  By Theorem 1.1(2) in \cite{MaZhang}, $E(n)$ is an Eulerian $A_n(K)$-module. 
\end{remark}

\begin{corollary}
Let $K$ be a field of characteristic zero. Let $R=K[X_1,\ldots, X_n]$ be standard graded. Let $\mathcal{T}$ be a graded Lyubeznik functor on $^*Mod(R)$. Then  $H_\mathfrak{m}^i\mathcal{T}(R)= E(n)^{a_i}$ for some $a_i\geq 0.$ 
\end{corollary}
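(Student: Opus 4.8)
The plan is to combine Theorem \ref{third} with the theorem of Ma--Zhang on local cohomology modules supported at $\m$, together with Property \ref{property2} on the rigidity of the degree shift. First I would observe that $\mathcal{T}(R)$ is a graded Lyubeznik functor applied to $R$, hence by Theorem \ref{third} it is a generalized Eulerian $A_n(K)$-module. Next, $H^i_\m(-)$ is itself (up to the kernel subtlety) the graded Lyubeznik functor attached to the homogeneous closed set $Y = V(X_1,\ldots,X_n)$, so $\mathcal{T}' := H^i_\m \circ \mathcal{T}$ is again a graded Lyubeznik functor; therefore $H^i_\m \mathcal{T}(R) = \mathcal{T}'(R)$ is generalized Eulerian by Theorem \ref{third}. (Alternatively one invokes Corollary \ref{property5} directly, since $\m$ is generated by homogeneous polynomials and $\mathcal{T}(R)$ is generalized Eulerian.)

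The second ingredient is that $H^i_\m \mathcal{T}(R)$ is a module supported only at $\m$ and is $\m$-torsion; being a subquotient of iterated local cohomology of a Noetherian ring's finitely-generated-type modules in the Lyubeznik sense, it is (graded) injective, in fact a direct sum of copies of $E$, i.e.\ $H^i_\m \mathcal{T}(R) \cong E(t)^{(\Lambda)}$ for some shift $t$ and index set $\Lambda$ — this is the structural input one gets from the graded analogue of Matlis duality / the fact that $\m$-torsion injectives in $^*Mod(R)$ are sums of shifts of $E$. (Here I would cite the same circle of ideas used in \cite{MaZhang} Theorem 1.1; alternatively, Lyubeznik's results give that $H^i_\m\mathcal{T}(R)$ has finite Bass numbers and is $\m$-torsion, hence is $E(t)^{a_i}$ for a single shift $t$ and finite $a_i$ once one also knows it is a finitely-generated $A_n(K)$-module / holonomic.)

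Now I would pin down the shift. By the Remark just above, $E(n)$ is an Eulerian, hence generalized Eulerian, $A_n(K)$-module, and it is nonzero. Since $H^i_\m\mathcal{T}(R) \cong E(t)^{a_i}$ is generalized Eulerian and nonzero (when $a_i > 0$), the module $E(t)$ is a direct summand of a generalized Eulerian module, hence generalized Eulerian by Property \ref{property1}. Writing $E(t) = E(n)(t-n)$, Property \ref{property2} forces $t - n = 0$, i.e.\ $t = n$. Therefore $H^i_\m\mathcal{T}(R) \cong E(n)^{a_i}$ with $a_i \geq 0$ (the case $a_i = 0$ being the vanishing case, which is consistent). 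This completes the argument.

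The main obstacle I anticipate is not the Eulerian bookkeeping — that is handled cleanly by Properties \ref{property1} and \ref{property2} — but rather justifying the structural claim that an $\m$-torsion graded $A_n(K)$-module arising as $H^i_\m\mathcal{T}(R)$ is literally a \emph{finite} direct sum $E(t)^{a_i}$ rather than an arbitrary direct sum of shifts of $E$. This requires knowing that $\mathcal{T}(R)$ (equivalently $\mathcal{T}'(R)$) is holonomic, so that its local cohomology has finite length in the category of holonomic $A_n(K)$-modules and hence finite-dimensional socle; this is exactly Lyubeznik's finiteness theorem \cite{Lyubeznik} applied to graded Lyubeznik functors, and one must check the grading is compatible. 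Once holonomicity and $\m$-torsion are in hand, the socle is a finite-dimensional $K$-vector space sitting in a single degree (again by the generalized Eulerian property combined with Property \ref{property2}), which collapses the direct sum to $E(n)^{a_i}$.
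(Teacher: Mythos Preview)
Your approach is correct and essentially the same as the paper's: invoke Ma--Zhang (their Proposition 5.5) together with Lyubeznik's finiteness result to write $H^i_\m\mathcal{T}(R) \cong \bigoplus_j E(n_j)$ as a finite direct sum, observe via Theorem \ref{third} that $H^i_\m\circ\mathcal{T}$ is a graded Lyubeznik functor so this sum is generalized Eulerian, and then use Properties \ref{property1} and \ref{property2} to force each $n_j = n$. The only slip is that you prematurely assume a single shift $t$ in the decomposition; a priori the shifts $n_j$ may differ, but your own direct-summand argument already handles each one separately, exactly as the paper does.
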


\begin{proof}
By Proposition 5.5 in \cite{MaZhang},
$$ H_\mathfrak{m}^i(\mathcal{T}(R))\cong\oplus_j ^*{E}(n_j).$$
By Lyubeznik's result (Corollary 2.14 in \cite{Lyubeznik}), the number of copies of $E(n_j)$ will be finite. Note $H_\mathfrak{m}^i\circ \mathcal{T}$ is a Lyubeznik functor. By Theorem  \ref{third},    $\oplus_j {E}(n_j)$ is a generalized Eulerian $A_n(K)$-module. Thus $E(n_j)$ is generalized Eulerian. So by Property \ref{property2}, $n_j=n$. Hence  we have $H_m^i\mathcal{T}(R)= E(n)^{a_i}$ for some $a_i\geq 0$.

\end{proof}
An easy consequence of Theorem 3.1 in \cite{Puthenpurakal2} is
\begin{corollary}\label{corollary}
Let $K$ be a field of characteristic zero. Let $R=K[X_1,\ldots, X_n]$ be standard graded.
Let $\mathcal{T}$ be a graded Lyubeznik functor on $^*Mod(R)$. 
Then de Rham cohomology  module $H^j(\partial, \mathcal{T}(R))$ is concentrated in degree $-n$, i.e.,
$$H^j(\partial, \mathcal{T}(R))_m=0, ~~~~~~~~~\text{for}~~ m\neq -n.$$ 
 
\end{corollary}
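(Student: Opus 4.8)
The statement to prove is that for a graded Lyubeznik functor $\mathcal{T}$ on $^*Mod(R)$, the de Rham cohomology $H^j(\partial, \mathcal{T}(R))$ is concentrated in degree $-n$. The plan is to reduce this to the already-established Theorem \ref{third} together with the cited result (Theorem 3.1 in \cite{Puthenpurakal2}). By Theorem \ref{third}, $\mathcal{T}(R)$ is a generalized Eulerian $A_n(K)$-module. So the whole content is: \emph{if $V$ is a generalized Eulerian $A_n(K)$-module, then $H^j(\partial, V)$ is concentrated in degree $-n$ for every $j \geq 0$}; this is precisely what Theorem 3.1 of \cite{Puthenpurakal2} asserts (and it is the special case $r=1$, or rather the ``full'' de Rham version, of part (3) of Theorem \ref{induct} in this paper with $r=1$, where the degree $-n+r-1$ becomes $-n$). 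So the first and essentially only step is to invoke these two results in sequence.

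Let me spell out the one-paragraph argument I would write. First, apply Theorem \ref{third}: since $\mathcal{T}$ is a graded Lyubeznik functor and $R$ is a (graded) Eulerian, hence generalized Eulerian, $A_n(K)$-module, the module $\mathcal{T}(R)$ is generalized Eulerian. Second, recall that $H^j(\partial, V)$ denotes the $j$-th de Rham cohomology, which by the Remark in Section 2 can be computed as Koszul homology $H_{n-j}(\partial_1, \ldots, \partial_n; V)$ with respect to the commuting elements $\partial_1, \ldots, \partial_n$, carrying its natural grading as an $A_0(K) = K$-module, i.e.\ as a graded $K$-vector space. Third, apply Theorem 3.1 of \cite{Puthenpurakal2} (equivalently, Theorem \ref{induct}(3) of the present paper with $r = 1$: the module $H^\nu(\partial_1, \ldots, \partial_n; V)$ is a $K$-vector space concentrated in degree $-n+1-1 = -n$) to conclude that $H^j(\partial, \mathcal{T}(R))_m = 0$ for all $m \neq -n$.

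I do not expect any genuine obstacle here, since the corollary is a formal consequence of two previously stated results; the only care needed is bookkeeping of the grading shift. One should double-check that the degree convention for $H^j(\partial, -)$ in this corollary matches the convention under which Theorem 3.1 of \cite{Puthenpurakal2} (or Theorem \ref{induct}(3) with $r=1$) gives concentration in degree exactly $-n$ rather than some shift thereof — with $\deg \partial_i = -1$ and $R$ in degrees $\geq 0$, the relevant Koszul/de Rham differentials lower degree, and the computation pins the surviving cohomology to degree $-n$, consistent with the statement. The potential subtlety, if any, is purely in verifying that ``de Rham cohomology'' as used in Corollary \ref{corollary} is the same functor (up to the stated grading) as the one in the cited theorem; once that identification is made, the proof is immediate.
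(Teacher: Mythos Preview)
Your proposal is correct and matches the paper's approach exactly: the paper simply records this corollary as ``an easy consequence of Theorem 3.1 in \cite{Puthenpurakal2}'' (after Theorem \ref{third} has established that $\mathcal{T}(R)$ is generalized Eulerian), without writing out any further details. Your additional bookkeeping about the grading and the identification with Theorem \ref{induct}(3) at $r=1$ is a faithful unpacking of that one-line citation.
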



\begin{remark} For Corollary \ref{corollary}, note that $\mathcal{T}(R)$ is a holonomic $A_n(K)$-module. So $H^j(\partial, \mathcal{T}(R))$ is a finite dimensional $K$-vector space (see Theorem 6.1 of Chapter 1 in \cite{Bjork}).
\end{remark}

\section{ Koszul homology of generalized Eulerian modules}
The main result of this section is the following:
\begin{theorem}\label{degreezero}
Let $M$ be a generalized Eulerian-$A_n(K)$-module. Then \\ $H_i(X_1, \ldots, X_n; M)$ is concentrated in degree zero.
\end{theorem}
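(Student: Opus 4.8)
The plan is to prove the statement by induction on $n$, using the exact sequence for Koszul homology from Lemma \ref{exact} together with the key identity in Lemma \ref{property3} (and its iterate), which shows how the Euler operator interacts with division. The base case $n=1$ amounts to checking that for a generalized Eulerian $A_1(K)$-module $M$, both $H_0(X_1;M) = M/X_1M$ and $H_1(X_1;M) = (0:_M X_1)$ are concentrated in degree $0$. For the inductive step, I would write $\mathbf{X} = X_1, \dots, X_n$ and $\mathbf{X}' = X_2, \dots, X_n$ (or peel off $X_n$ instead — either works), apply Lemma \ref{exact} to get the short exact sequence
\[
0 \to H_0(X_1; H_i(\mathbf{X}'; M)) \to H_i(\mathbf{X}; M) \to H_1(X_1; H_{i-1}(\mathbf{X}'; M)) \to 0,
\]
and try to conclude that the outer terms are concentrated in degree $0$, hence so is the middle.

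The obstacle here — and I expect this to be the main difficulty — is that $H_i(\mathbf{X}'; M)$ is a priori only a module over $A_{n-1}(K)$-type structure in the variables $X_2,\dots,X_n, \partial_2,\dots,\partial_n$, but it is \emph{not} generalized Eulerian with respect to $\mathcal{E}_{n-1} = \sum_{i=2}^n X_i\partial_i$: the variable $X_1$ (and $\partial_1$) still act, and the relevant Euler operator is $\mathcal{E}_n$, not $\mathcal{E}_{n-1}$. So the clean inductive hypothesis does not immediately apply. The way around this is to first establish the needed generalized-Eulerian-type behavior of the partial Koszul homology $H_i(\mathbf{X}'; M)$ directly: one shows that $H_i(X_2,\dots,X_n; M)$, as a module over the subring generated by $X_1,\partial_1$ and the Euler operator, still satisfies $(\mathcal{E}_n - |z|)^a z = 0$ for homogeneous $z$. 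This is because $\mathcal{E}_n$ commutes with each $X_j$ up to the obvious correction ($[\mathcal{E}_n, X_j] = X_j$), so $\mathcal{E}_n$ acts on the Koszul complex $K_\bullet(X_2,\dots,X_n; M)$ compatibly with a degree shift, and passes to homology with the generalized Eulerian property intact. In effect this is part (2) of Theorem \ref{induct} with $r=2$, which I am allowed to assume; but even without invoking it, the argument is the $n=1$ computation applied one variable at a time combined with the fact that $\mathcal{E}_n$ preserves (generalized) Eulerian-ness under the Koszul differential.

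Concretely, the steps I would carry out are: (1) prove the $n=1$ case by a direct computation — if $(\mathcal{E}_1 - |z|)^a z = 0$ and $z$ represents a class in $M/X_1M$ or in $(0:_M X_1)$, then since $\mathcal{E}_1 = X_1\partial_1$ we have $\mathcal{E}_1 z = X_1 \partial_1 z$ which dies in $M/X_1M$, forcing $|z|^a z = 0$ in the quotient, i.e. $|z| = 0$ unless the class is zero; for $(0:_M X_1)$ a symmetric argument with $\partial_1 X_1 = \mathcal{E}_1 + 1$ works; (2) record that for any generalized Eulerian $M$ and any subset of the $X_j$'s, the partial Koszul homology remains "generalized Eulerian relative to $\mathcal{E}_n$" in the sense above — this follows because the Koszul differentials are given by multiplication by homogeneous elements of $R$ and Lemma \ref{property3}'s philosophy (or just $[\mathcal{E}_n, X_j] = X_j$) shows $\mathcal{E}_n$ acts on homology shifting internal degree correctly; (3) feed this into the Lemma \ref{exact} short exact sequence, apply the one-variable case (step 1) to $H_0(X_1; -)$ and $H_1(X_1; -)$ of these modules to see both outer terms are concentrated in degree $0$; (4) conclude by the long exact sequence / the fact that a module squeezed between two modules concentrated in degree $0$ is itself concentrated in degree $0$. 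I expect step (2) — getting the right notion of "generalized Eulerian" to propagate through partial Koszul homology without prematurely restricting to a smaller Weyl algebra — to be the crux, and everything else to be routine.
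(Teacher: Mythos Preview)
Your overall architecture is correct and matches the paper's: reduce to the one-variable case via Lemma \ref{exact}, having first shown that the partial Koszul homology $H_j(X_2,\ldots,X_n;M)$ is generalized Eulerian over $A_1(K)$, and then apply the direct computation for $A_1(K)$ (the paper packages these as Proposition \ref{xn}, Theorem \ref{x2}, and Proposition \ref{x1}).

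One point of confusion worth flagging: in your step (2) you propose to show that $H_i(X_2,\ldots,X_n;M)$ ``still satisfies $(\mathcal{E}_n-|z|)^a z=0$'', but $\mathcal{E}_n$ does not act on this homology in any naive sense, since $\partial_j$ for $j\geq 2$ fails to commute with the Koszul differential. The clean statement---which you correctly identify as Theorem \ref{induct}(2)---is that this homology is generalized Eulerian over $A_1(K)$ with respect to the \emph{smaller} Euler operator $\mathcal{E}_1=X_1\partial_1$. The paper obtains this by peeling off one variable at a time: using $\mathcal{E}_n=\mathcal{E}_{n-1}+X_n\partial_n=\mathcal{E}_{n-1}+\partial_nX_n-1$, one shows (Proposition \ref{xn}) that $H_\ast(X_n;M)$ is generalized Eulerian over $A_{n-1}(K)$, then iterates. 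Also, in your base case for $H_1(X_1;M)$ be careful with the degree shift: the computation gives $|z|=-1$ in $M$, which is degree $0$ in $H_1(X_1;M)\subset M(-1)$.
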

We also give a proof of Theorem \ref{induct}.
\begin{remark}
Note that if $N$ is a holonomic $A_n(K)$-module, then $H_i(\underline{X}; N)$ are finite dimensional vector space. This follows by similar way to Bj${\ddot{\text{o}}}$rk's proof to show that De Rham cohomology of holonomic module is finite dimensional (see Theorem 6.1 of Chapter 1 in \cite{Bjork}).
\end{remark}

Before proving Theorem \ref{degreezero}, we need to prove a few preliminary results.

\begin{proposition}\label{xn}
Let $M$ be a generalized Eulerian $A_n(K)$-module. Then \\ $H_1(X_n; M)$ and $H_0(X_n; M)$ are generalized Eulerian $A_{n-1}(K)$-modules.
\end{proposition}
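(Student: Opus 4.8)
The statement concerns $H_1(X_n;M)$ and $H_0(X_n;M)$, which are the kernel and cokernel of multiplication by $X_n$ on $M$. The plan is to show that each carries a natural structure of graded $A_{n-1}(K)$-module and then verify the generalized Eulerian condition with respect to $\E_{n-1} = \sum_{i=1}^{n-1}X_i\partial_i$. The first point to observe is that $H_0(X_n;M) = M/X_nM$ and $H_1(X_n;M) = (0:_M X_n)$ are both $A_{n-1}(K)$-modules: multiplication by $X_n$ is $A_{n-1}(K)$-linear (since $X_n$ commutes with $X_1,\dots,X_{n-1},\partial_1,\dots,\partial_{n-1}$), so both kernel and cokernel inherit the $A_{n-1}(K)$-action, and the grading is inherited as well since $X_n$ is homogeneous.

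\textbf{Key steps.} First I would record the commutator identity $\E_n = \E_{n-1} + X_n\partial_n$, and more usefully $[\E_n, X_n] = X_n$ and $[\E_n,\partial_n] = -\partial_n$, so that $X_n\partial_n$ acts on a homogeneous element in a controlled way. The crucial algebraic fact is: if $z \in M$ is homogeneous with $(\E_n - |z|)^a z = 0$, I want to produce an analogous identity for $\E_{n-1}$ acting on the image/class of $z$ in the Koszul homology. For $H_0(X_n;M) = M/X_nM$: take a homogeneous $\bar z$ with representative $z$, $|\bar z| = |z|$. Since $X_n\partial_n \cdot z \equiv 0 \pmod{X_nM}$, modulo $X_nM$ we have $\E_n z \equiv \E_{n-1} z$. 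Hence $(\E_{n-1} - |z|)^a \bar z = \overline{(\E_n - |z|)^a z} = 0$ provided one can commute the powers past the reduction mod $X_nM$ — and here one must be careful because $\E_{n-1}$ and $\E_n$ differ by $X_n\partial_n$, which is zero only \emph{after} reduction. So the cleaner route is to expand $(\E_n - |z|)^a = (\E_{n-1} - |z| + X_n\partial_n)^a$ and argue that every term containing at least one factor $X_n\partial_n$ lands in $X_nM$ \emph{after} collecting all the $X_n$'s to the left using $[\E_{n-1}, X_n] = 0$ and $[\partial_n, X_n] = 1$; the point is that $X_n\partial_n X_n = X_n(X_n\partial_n + 1)$, so one can always sweep an $X_n$ to the far left, showing that $(\E_n-|z|)^a z - (\E_{n-1}-|z|)^a z \in X_nM$. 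This gives $(\E_{n-1}-|\bar z|)^a \bar z = 0$ in $M/X_nM$.

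\textbf{The dual case and the main obstacle.} For $H_1(X_n;M) = (0:_M X_n)$: if $z$ is homogeneous with $X_n z = 0$, then $X_n\partial_n z = \partial_n X_n z - z \cdot 0$... more carefully, $X_n\partial_n z = (\partial_n X_n - 1)z \cdot$(wrong order) — one computes $\partial_n(X_n z) = X_n\partial_n z + z = 0$, so $X_n\partial_n z = -z$. Thus on $(0:_M X_n)$ we have $\E_n z = \E_{n-1}z - z$, i.e. $\E_{n-1}$ acts as $\E_n + 1$ on this submodule. Therefore $(\E_{n-1} - |z|)^a z = (\E_n + 1 - |z|)^a z$, which is \emph{not} obviously zero — we only know $(\E_n - |z|)^a z = 0$. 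This is the main obstacle, and it explains why a shift is needed: the correct statement should presumably be that $(0:_M X_n)$ is generalized Eulerian after a twist, or equivalently $(0:_M X_n)(-1)$ is generalized Eulerian, matching the pattern $H^\nu(\partial_r,\dots;V)(-n+r-1)$ in Theorem \ref{induct}. I would resolve this by checking that $(0:_M X_n)$ is stable under $\E_{n-1}$ and that for homogeneous $z$ in degree $d$, viewing it in the shifted module it has degree $d+1$, and then $(\E_{n-1} - (d+1))^a z = (\E_n - d)^a z = 0$ using $\E_{n-1}z = (\E_n+1)z$ on the annihilator submodule. So the honest proof needs the homology degree conventions made precise; assuming the paper's Koszul homology grading places $H_1(X_n;M)$ so that a cycle $z$ sits in degree $|z|-1$ (coming from the homological shift), the shift is automatic and no extra twist appears in the statement. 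I would nail down this grading convention first, then the two computations above — reduction mod $X_nM$ for $H_0$ and the identity $X_n\partial_n z = -z$ on $(0:X_n)$ for $H_1$ — finish the proof, invoking Property \ref{property1} only implicitly (the real work is the two commutator computations).
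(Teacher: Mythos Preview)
Your approach is correct and essentially identical to the paper's: for $H_0$ you expand $(\E_{n-1} - |z| + X_n\partial_n)^a$ and note that all cross-terms land in $X_nM$, and for $H_1$ the paper does the mirror computation by writing $\E_n = \E_{n-1} + \partial_n X_n - 1$ so that $X_n$ sits on the right and kills $u$ directly --- which is exactly your identity $X_n\partial_n z = -z$ rewritten. Your hesitation about the grading is resolved precisely as you guessed: the paper writes the Koszul complex as $M(-1) \xrightarrow{X_n} M$, so a cycle of degree $d$ in $M$ has degree $d+1$ in $H_1(X_n;M)$, and that shift by $+1$ is what makes $(\E_{n-1} - (d+1))^a z = (\E_n - d)^a z = 0$ go through.
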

\begin{proof}
We have an exact sequence of $A_{n-1}(K)$-modules
$$ 0 \longrightarrow H_1(X_n; M) \longrightarrow M(-1) \overset{{X_n}}\longrightarrow M \longrightarrow H_0(X_n; M) \longrightarrow 0.$$
Let $u\in H_1(X_n; M)$ be homogeneous of degree $r$. So we have to prove that
$$ (\mathcal{E}_{n-1} - r)^au = 0\hspace{1cm} \text{for some}~~~ a\geq 1.$$
Notice that $u\in M(-1)_r = M_{r-1}.$ As $M$ is generalized Eulerian, we have

$$ (\mathcal{E}_n- (r-1))^bu=0 \hspace{1cm} \text{for some}~~~ b\geq 1.$$
Since $\partial_n X_n - X_n \partial_n = 1$, we can write

$$\mathcal{E}_n= \mathcal{E}_{n-1} + \partial_n X_n -1.$$
Now we have

$$ (\mathcal{E}_{n-1} - r + \partial_n X_n)^bu=0.$$
Note that $\partial_nX_n$ commutes with $\mathcal{E}_{n-1}.$ Thus
$$0=(\mathcal{E}_{n-1} - r + \partial_n X_n)^bu = (\mathcal{E}_{n-1} - r)^bu + (\ast)\partial_n X_nu = (\mathcal{E}_{n-1} - r)^bu + \alpha X_nu,$$
where $\alpha \in A_n(K)$. Since $X_nu=0$, we get
$$(\mathcal{E}_{n-1} - r)^bu=0.$$
It follows that $H_1(X_n; M)$ is generalized Eulerian $A_{n-1}(K)$-module.

Let  $v\in H_0(X_n; M)$ be homogeneous of degree $s$. Then $v= m + X_nM,$ where $m\in M$ of degree $s$. Because $M$ is generalized Eulerian, we get
$$(\mathcal{E}_n - s)^am =0 \hspace{1cm} \text{for some}~~~ a\geq 1.$$
$$ (\mathcal{E}_{n-1} - s + X_n\partial_n)^am=0.$$
Note that $X_n\partial_n$ commutes with $\mathcal{E}_{n-1}.$ Thus
$$0=(\mathcal{E}_{n-1} - s + X_n\partial_n)^am = (\mathcal{E}_{n-1} - s)^am +  X_n\cdot (\ast)m = (\mathcal{E}_{n-1} - s)^am +  X_n\alpha m,$$
where $\alpha \in A_n(K)$. Going mod  $X_nM$, we get
$$(\mathcal{E}_{n-1} - s)^av=0.$$
It follows that $H_0(X_n; M)$ is generalized Eulerian $A_{n-1}(K)$-module.

\end{proof}

\begin{theorem}\label{x2}
Let $M$ be a generalized Eulerian $A_n(K)$-module. 
Then for $i\geq 2$ and for each  $j\geq 0$, the de Rham homology module $H_j(X_i, X_{i+1}, \dots, X_n; M)$ is a generalized Eulerian $A_{i-1}(K)$-module.
\end{theorem}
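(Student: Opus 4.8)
The plan is to induct on the number of variables $i, i+1, \dots, n$ appearing in the Koszul complex, i.e., induct downward on $i$ from $i = n$ to $i = 2$. The base case $i = n$ is exactly Proposition \ref{xn}: for a generalized Eulerian $A_n(K)$-module $M$, both $H_0(X_n; M)$ and $H_1(X_n; M)$ are generalized Eulerian $A_{n-1}(K)$-modules, which is the statement for the single-element sequence $X_n$. (Higher Koszul homology $H_j$ for $j \geq 2$ vanishes trivially for a one-element sequence.)

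For the inductive step, suppose the result holds for sequences starting at $X_{i+1}$, so that for every generalized Eulerian $A_n(K)$-module $V$ and every $j \geq 0$, the module $H_j(X_{i+1}, \dots, X_n; V)$ is a generalized Eulerian $A_i(K)$-module. Now fix a generalized Eulerian $A_n(K)$-module $M$. I would apply Lemma \ref{exact} with $\mathbf{u} = X_i, X_{i+1}, \dots, X_n$ and $\mathbf{u'} = X_{i+1}, \dots, X_n$: for each $j \geq 0$ there is a short exact sequence of graded modules
$$0 \rightarrow H_0(X_i; H_j(X_{i+1}, \dots, X_n; M)) \rightarrow H_j(X_i, \dots, X_n; M) \rightarrow H_1(X_i; H_{j-1}(X_{i+1}, \dots, X_n; M)) \rightarrow 0.$$
By the inductive hypothesis, $H_j(X_{i+1}, \dots, X_n; M)$ and $H_{j-1}(X_{i+1}, \dots, X_n; M)$ are generalized Eulerian $A_i(K)$-modules. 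Then I would invoke Proposition \ref{xn}, but applied in the Weyl algebra $A_i(K)$ rather than $A_n(K)$: the proposition says that for a generalized Eulerian $A_i(K)$-module $W$, both $H_0(X_i; W)$ and $H_1(X_i; W)$ are generalized Eulerian $A_{i-1}(K)$-modules. This makes the two outer terms of the short exact sequence generalized Eulerian $A_{i-1}(K)$-modules, and Property \ref{property1} (the two-out-of-three property for short exact sequences) forces the middle term $H_j(X_i, \dots, X_n; M)$ to be generalized Eulerian over $A_{i-1}(K)$ as well, completing the induction.

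The main point requiring care — rather than a genuine obstacle — is checking that Proposition \ref{xn} and Lemma \ref{exact} are genuinely available at each intermediate Weyl algebra $A_i(K)$ and are compatible with the grading: one must verify that $H_j(X_{i+1}, \dots, X_n; M)$ carries a natural structure of graded $A_i(K)$-module (this is noted in the excerpt, since $X_{i+1}, \dots, X_n$ commute and the Koszul differentials are $A_i(K)$-linear), and that the Euler operator relevant at stage $A_i(K)$ is $\mathcal{E}_i = \sum_{k=1}^{i} X_k \partial_k$, with the degree conventions matching. The key algebraic identity underlying Proposition \ref{xn} — namely $\mathcal{E}_i = \mathcal{E}_{i-1} + \partial_i X_i - 1 = \mathcal{E}_{i-1} + X_i \partial_i$, with $\partial_i X_i$ and $X_i \partial_i$ both commuting with $\mathcal{E}_{i-1}$ — is exactly what propagates through the induction, and no new commutator computation is needed beyond what is already done in the proof of Proposition \ref{xn}.
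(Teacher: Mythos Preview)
Your proposal is correct and follows essentially the same argument as the paper: descending induction on $i$ with base case Proposition~\ref{xn}, the inductive step via the short exact sequence from Lemma~\ref{exact}, Proposition~\ref{xn} applied over $A_i(K)$ to the outer terms, and Property~\ref{property1} to conclude. The additional remarks you make about grading compatibility and the availability of Proposition~\ref{xn} at each intermediate Weyl algebra are reasonable clarifications, but the core strategy is identical.
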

\begin{proof}
We prove the result by descending induction on $i$. For $i=n$, the result holds for Proposition \ref{xn}. Set $X=X_i, X_{i+1}, \dots, X_n$ and $X'=X_{i+1}, \dots, X_n$. By induction hypothesis, $N_j=H_j(X'; M)$ are generalized Eulerian $A_i(K)$-module.  By Proposition \ref{xn} again, for $j'=0, 1$ and for each $j \geq 0$, $H_{j'}(X_i; N_j)$ are generalized Eulerian $A_{i-1}(K)$-modules. By Lemma \ref{exact}, we have the exact sequence
$$ 0 \rightarrow H_0(X_i; N_j)\rightarrow H_j(X; M) \rightarrow H_1(X_i; N_{j-1}) \rightarrow 0.$$ 
Since the modules at the left and the right end are generalized Eulerian, by Property \ref{property1} it follows that for each $j\geq0$ the de Rham homology module $H_j(X; M)$ is a generalized Eulerian $A_{i-1}(K)$-module.
\end{proof}
Finally we show:
\begin{proposition}\label{x1}
Let $M$ be a generalized Eulerian $A_1(K)$-module. Then  the modules $H_0(X_1; M)$ and $H_1(X_1; M)$ are concentrated in degree 0.
\end{proposition}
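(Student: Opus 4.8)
The plan is to notice that this proposition is exactly the $n=1$ instance of Proposition~\ref{xn}, once one accepts the (entirely harmless) degenerate conventions $A_0(K)=K$ and $\mathcal{E}_0=\sum_{i=1}^{0}X_i\partial_i=0$. Indeed, tracing the proof of Proposition~\ref{xn} with $n=1$ uses nothing beyond $\partial_1X_1-X_1\partial_1=1$ and $X_1u=0$, and it produces no obstruction for $n=1$; so it already tells us that $H_0(X_1;M)$ and $H_1(X_1;M)$ are generalized Eulerian $K$-modules. The only additional point to record is the elementary remark that a generalized Eulerian $K$-module $V$ is concentrated in degree $0$: if $v\in V_d$ with $d\neq 0$, then $(\mathcal{E}_0-d)^av=(-d)^av=0$ for some $a\geq 1$, and since $K$ has characteristic zero $(-d)^a$ is a nonzero scalar, forcing $v=0$.

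If one prefers to argue directly, the computation is the same and I would carry it out as follows, using $\mathcal{E}_1=X_1\partial_1=\partial_1X_1-1$. For $H_1(X_1;M)=\ker\bigl(X_1\colon M(-1)\to M\bigr)$, take a homogeneous cycle $u$ of degree $r$; then $u\in M_{r-1}$ and $X_1u=0$, so since $M$ is generalized Eulerian $(\mathcal{E}_1-(r-1))^bu=0$ for some $b\geq 1$, which upon substituting $\mathcal{E}_1=\partial_1X_1-1$ becomes $(\partial_1X_1-r)^bu=0$. Expanding the binomial (the scalar $r$ commutes with $\partial_1X_1$) and using $(\partial_1X_1)^ku=0$ for every $k\geq 1$, which holds because $X_1u=0$, leaves $(-r)^bu=0$. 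For $H_0(X_1;M)=M/X_1M$, a homogeneous class $v$ of degree $s$ is represented by $m\in M_s$ with $(\mathcal{E}_1-s)^am=0$; expanding $(X_1\partial_1-s)^am$ and using $(X_1\partial_1)^km\in X_1M$ for every $k\geq 1$ gives $(-s)^am\in X_1M$, i.e. $(-s)^av=0$ in $H_0(X_1;M)$.

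In each case a nonzero power of a nonzero integer annihilates the homogeneous element, so in characteristic zero the element must vanish whenever its degree is nonzero; hence $H_0(X_1;M)$ and $H_1(X_1;M)$ are concentrated in degree $0$. I do not expect any genuine obstacle here: the only bookkeeping point is the degree shift in $H_1(X_1;M)$ (so that the relevant Euler degree of a degree-$r$ cycle is $r-1$, not $r$), and the apparent degeneracy of the base ring $A_0(K)=K$ is dealt with simply by observing $\mathcal{E}_0=0$. This proposition is precisely the base case which, combined with Lemma~\ref{exact} and Theorem~\ref{x2}, yields Theorem~\ref{degreezero} by descending induction.
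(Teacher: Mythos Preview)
Your proposal is correct and takes essentially the same approach as the paper: both arguments rewrite $\mathcal{E}_1=X_1\partial_1=\partial_1X_1-1$, expand the power $(\mathcal{E}_1-\text{deg})^a$, and use $X_1u=0$ (respectively, work modulo $X_1M$) to isolate the scalar term $(-r)^a$ (respectively $(-s)^a$), forcing the degree to be zero in characteristic zero. Your additional framing of the result as the degenerate case $n=1$ of Proposition~\ref{xn} with $A_0(K)=K$ and $\mathcal{E}_0=0$ is a pleasant conceptual observation not made explicit in the paper, but the underlying computation is the same.
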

\begin{proof}
We have an exact sequence of $K$-vector spaces
$$ 0 \longrightarrow H_1(X_1; M) \longrightarrow M(-1) \overset{{X_1}}\longrightarrow M \longrightarrow H_0(X_1; M) \longrightarrow 0.$$
Let $\xi \in H_1(X_1; M)(1)$ be homogeneous and nonzero. Since $\xi \in M$ and $M$ is a generalized Eulerian $A_1(K)$-module, we have
$$ (X_1\partial_1 - |\xi|)^a\xi= 0 \hspace{1cm} \text{for some}~~ a\geq 1.$$
Note that $(X_1\partial_1 - |\xi|)^a = (\partial_1X_1 -1 -|\xi|)^a=  (\ast)X_1 + (-1)^a(1+ |\xi|)^a. $ Thus 
$$\alpha X_1\xi + (-1)^a(1+ |\xi|)^a\xi=0,$$ where $\alpha\in A_1(K).$
Since $X_1\xi=0$ and $\xi \neq 0$, we get
$|\xi|=-1$. It follows that $H_1(X_1; M)$ are concentrated in degree 0.

Let $\xi \in H_0(X_1; M)$ be nonzero and homogeneous of degree $r$. Then $\xi = m + X_1M,$ where $m\in M$ of degree $r$. Since $M$ is generalized Eulerian, we get 

$$(X_1\partial_1 - r)^am=0 \hspace{1cm} \text{for some}~~~a\geq 1.$$
$$X_1\cdot (\ast)m + (-1)^ar^am=0. $$

In $M/X_1M$, we have $(-1)^ar^a\xi=0.$ Because $\xi \neq 0,$ we get $r=0. $ It follows that $H_0(X_1; M)$ are concentrated in degree 0.

\end{proof}

Now we give the following:
\begin{proof}[Proof of Theorem \ref{degreezero}]
Set $X'=X_2, \dots, X_n$. By Proposition \ref{x2}, $N_j = H_j(X'; M)$ is generalized Eulerian $A_{1}(K)$-module, for each $j\geq 0$. By Lemma \ref{exact}, we have
$$ 0 \rightarrow H_0(X_1; N_j)\rightarrow H_j(X; M) \rightarrow H_1(X_1; N_{j-1}) \rightarrow 0.$$
for each $j\geq 0.$ By Proposition \ref{x1}, the modules on the left and right of the above exact sequence are concentrated in degree 0. It follows that for each $j\geq 0$ the $K$-vector space $H_j(X; M)$ is also concentrated in degree 0.

\end{proof}
A surprising consequence of Theorem \ref{degreezero} is the following:
\begin{corollary}
Let $M$ be a generalized Eulerian $A_n(K)$-module. If $M$ is also finitely generated as an $R$-module, then $M=0$ or $M=R^m$ for some $m\geq 1$.
\end{corollary}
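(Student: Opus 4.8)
The plan is to use Theorem \ref{degreezero} together with the structure theory of finitely generated $R$-modules, exploiting that Koszul homology with respect to a system of parameters detects both depth and the minimal number of generators. First I would recall that $X_1, \ldots, X_n$ is a homogeneous system of parameters for $R$, hence for any finitely generated graded $R$-module $M$ it is a regular sequence precisely when $M$ is Cohen--Macaulay of depth $n$, i.e.\ free. The key numerical observation is that $H_0(\underline{X}; M) = M/\mathfrak{m}M$, which records the minimal generators of $M$, while $H_n(\underline{X}; M) = (0 :_M \mathfrak{m})$, which is nonzero as soon as $M \neq 0$ and vanishes exactly when $\depth_R M \geq 1$.

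Next I would run the argument as follows. Suppose $M \neq 0$ is a generalized Eulerian $A_n(K)$-module that is finitely generated over $R$. By Theorem \ref{degreezero}, every $H_i(\underline{X}; M)$ is concentrated in degree zero. In particular $H_n(\underline{X}; M) = (0:_M \mathfrak{m})$ lives only in degree $0$. Now I would argue that $M$ must be generated in degree $0$: since $M/\mathfrak{m}M = H_0(\underline{X}; M)$ is concentrated in degree $0$, by graded Nakayama $M$ is generated by finitely many elements all of degree $0$. This already forces $M$ to be a quotient of $R^m$ (with the standard grading) for $m = \dim_K (M/\mathfrak{m}M)$. To upgrade the surjection $R^m \twoheadrightarrow M$ to an isomorphism, I would consider the kernel $L$, so that $0 \to L \to R^m \to M \to 0$ is exact. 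Taking the Koszul homology long exact sequence with respect to $\underline{X}$, and using that $H_i(\underline{X}; R^m)$ is concentrated in degree $0$ for all $i$ (indeed $H_0 = K^m$ in degree $0$ and $H_i = 0$ for $i>0$ since $\underline{X}$ is regular on $R$) together with the fact that $H_i(\underline{X}; M)$ is concentrated in degree $0$, I would deduce that $H_i(\underline{X}; L)$ is also concentrated in degree $0$ for all $i$. But $L$ is a submodule of $R^m$, so it has depth $\geq 1$ unless $L = 0$; if $L \neq 0$ then $H_n(\underline{X}; L) = (0:_L \mathfrak{m}) = 0$ while $L/\mathfrak{m}L \neq 0$ forces $L$ to have generators, and one then chases degrees: a nonzero submodule $L \subseteq R^m$ cannot be generated in degree $0$ only (any nonzero element of $L$ of minimal degree $d$ gives a generator in degree $d$, and $d \geq 1$ is impossible while $d = 0$ would contradict $R^m/L = M$ having the same number of degree-$0$ generators, i.e.\ the map $K^m = (R^m)_0 \to M_0$ would fail to be injective). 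Hence $L = 0$ and $M \cong R^m$.

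An alternative, cleaner route that I would actually prefer: induct on $m = \mu(M) = \dim_K M/\mathfrak{m}M$. If $M \neq 0$ is generalized Eulerian and finitely generated over $R$, pick a minimal generator; since $M/\mathfrak{m}M$ is concentrated in degree $0$ this generator has degree $0$, giving a graded map $\varphi\colon R \to M$, $1 \mapsto$ that generator. The image $\varphi(R) \cong R/\ann(1)$ is a graded cyclic submodule; one shows $\ann(1) = 0$ because $R/\ann(1)$ must be generalized Eulerian (it is an $A_n(K)$-submodule? — here one needs care, see below) and $H_n(\underline{X}; -)$ concentrated in degree $0$ forces it to be a domain of depth $n$, hence $\ann(1)=0$. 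Then $R \hookrightarrow M$ is a direct summand because $R$ is the only rank-one graded torsion-free... — again depth considerations via Koszul homology give that $M/R$ is again generalized Eulerian (Property \ref{property1}) with $\mu(M/R) = m-1$, and we finish by induction.

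The main obstacle I anticipate is the step asserting that the relevant submodules and quotients appearing in these exact sequences are again \emph{generalized Eulerian $A_n(K)$-modules}, not merely graded $R$-modules: the submodule $\varphi(R) \subseteq M$ is an $R$-submodule but a priori not an $A_n(K)$-submodule, so Property \ref{property1} does not directly apply. The fix is to avoid cyclic submodules and instead work entirely at the level of Koszul homology: all the $H_i(\underline{X}; M)$ are graded $K$-vector spaces (finite-dimensional since $M$ is holonomic, as $M$ finitely generated over $R$ makes it holonomic), they are concentrated in degree $0$ by Theorem \ref{degreezero}, and the numerical conclusion $\sum_i (-1)^i \dim_K H_i(\underline{X}; M) = 0$ combined with vanishing outside degree $0$ pins down the Hilbert series of $M$ to be that of $R^m$; then a separate, purely commutative-algebra argument (using that $H_{>0}(\underline{X}; M) = 0$ would force $M$ free, and that any non-free module has $H_i(\underline{X}; M) \neq 0$ for some $i \geq 1$ which, by degree-$0$ concentration, must still be compatible) forces $M$ free and generated in degree $0$, i.e.\ $M = R^m$. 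Pinning down exactly why the degree-$0$ concentration of higher Koszul homology (rather than its vanishing) already implies freeness is the delicate point, and I expect the argument to hinge on the graded minimal free resolution: a graded minimal free resolution $F_\bullet \to M$ has $F_i = \bigoplus R(-a_{ij})$, and $H_i(\underline{X}; M) = \Tor_i^R(K, M)$ has $K(-a_{ij})$ as summands, so degree-$0$ concentration forces all $a_{ij} = 0$, forcing $F_i = R^{b_i}$ with all maps given by matrices of scalars, which for a \emph{minimal} resolution means all differentials are zero, hence $F_i = 0$ for $i \geq 1$ and $M = R^{b_0}$.
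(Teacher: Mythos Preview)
Your final argument via the graded minimal free resolution is correct and is the cleanest of the three attempts: identifying $H_i(\underline{X};M)=\Tor_i^R(K,M)$, reading off the graded Betti numbers, and concluding from degree-$0$ concentration that every $F_i$ is a direct sum of unshifted copies of $R$, whence minimality forces the differentials to vanish and $M\cong R^{b_0}$. This is a genuinely different packaging from the paper's proof.

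The paper's argument is closer to your first attempt but more direct. It uses only $H_0$ and $H_1$: from $H_0(\underline{X};M)$ concentrated in degree $0$ one gets a minimal surjection $\phi\colon R^m\to M$, and minimality immediately gives $(\ker\phi)_j=0$ for $j\le 0$. Then the long exact sequence yields $H_1(\underline{X};M)\cong (\ker\phi)/\mathfrak{m}(\ker\phi)$, which, if $\ker\phi\neq 0$, is nonzero in some degree $\ge 1$, contradicting Theorem~\ref{degreezero}. So where you argued that $H_0(\underline{X};L)$ is concentrated in degree $0$ and then that $L_0=0$, the paper instead argues directly that $L=\ker\phi$ lives in degrees $\ge 1$ and pushes the contradiction into $H_1$. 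Both work; the paper's version avoids the long exact sequence bookkeeping you went through.

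Your second (inductive) approach does have exactly the obstacle you flagged: $\varphi(R)$ need not be an $A_n(K)$-submodule, so Property~\ref{property1} is not available there. You were right to abandon it. Your Tor/minimal-resolution argument sidesteps this entirely, and in fact yields a slightly stronger statement for free: all graded Betti numbers of $M$ over $R$ vanish outside degree $0$, not just the first two.
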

\begin{proof}
Suppose $M\neq 0$. By Theorem \ref{degreezero}, we have
$$H_0(X_1, \ldots, X_n; M)_j=0   \hspace{1cm} \text{for}~~~ j\neq 0. $$
In particular $M/ (\X) M$  is concentrated in $\deg 0.$
By graded Nakayama's Lemma, we have a surjective minimal map
$$ R^m \overset{{\phi}}\rightarrow M \rightarrow 0.$$
Consider $N=Ker \phi$. If $N\neq 0$, then as $\phi$ is minimal $N_j=0$ for $j \leq 0$.
This forces $H_1(X_1, \ldots, X_n; M)_j\neq 0$ for some $j> 0$. But this is a contradiction. So $N=0$. 
This implies that $M=R^m.$
\end{proof}

Finally we indicate 
\begin{proof}[Proof of Theorem \ref{induct}]
 (1) This follows from Proposition 3.4 in \cite{Puthenpurakal2}. \\
 (2) This follows from Theorem \ref{x2}.\\
 (3) This follows from (1), (2) and Lemma \ref{exact}.
\end{proof}

\section{Proof of Theorem \ref{second}}
In this section we state and prove a more general result which implies Theorem \ref{second}.
\s We  recall a construction from \cite[p.\ 18]{Bjork} which shows that if $M, N$ are $A_n(K)$-modules then 
for all $\nu \geq 0$ the $R$-module $\Tor^R_\nu(M, N)$ has a natural structure of a left $A_n(K)$-module. 
We first note that $M\otimes_R N$ is an $A_n(K)$-module with following action of $\partial_i$:  \[
\partial_i(m\otimes n) = (\partial_i m)\otimes n + m \otimes (\partial_i n).
\]
Now consider  a free resolution  of the left $A_n(K)$-module $M$
\[
\mathbb{F} \colon   \cdots \rt F_n \rt \cdots \rt F_1 \rt F_0 \rt M \rt 0
\]
where $F_i$ are free $A_n(K)$-modules. We note that as $A_n(K)$ is a free $R$-module the complex $\mathbb{F}$ is a resolution of $M$ by free $R$-modules.
It is now clear that $\Tor^R_\nu(M, N) = H_\nu(\mathbb{F}\otimes_R N)$ have natural structure of  $A_n(K)$-modules.
By Theorem 1.6.4 \cite{Bjork}; if $M, N$ are holonomic  $A_n(K)$ modules then so is  $\Tor^R_\nu(M, N)$ for all $\nu \geq 0$

\s If $M, N$ are graded $A_n(K)$-modules, then we can choose $\mathbb{F}$ to be a graded free resolution of $M$ (by graded free $A_n(K)$-modules). So $\Tor^R_\nu(M, N)$ are graded $A_n(K)$-modules.

The main result of this section is
\begin{theorem}\label{tor-R}
Let $M, N$ be graded generalized Eulerian   \ $A_n(K)$-modules. Then $\Tor^R_\nu(M,N)$ is generalized Eulerian for all $\nu \geq 0$.
\end{theorem}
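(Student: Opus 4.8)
The plan is to reduce the statement about $\Tor^R_\nu(M,N)$ to the already-established fact (Corollary \ref{property5}) that local cohomology of a generalized Eulerian module is generalized Eulerian, by exhibiting $\Tor^R_\nu(M,N)$ as something built out of local cohomology modules together with the closure of generalized Eulerian modules under extensions and kernels/cokernels (Property \ref{property1}). The cleanest route I would try first is to use the construction recalled just before the statement: take a graded free resolution $\mathbb{F}\colon \cdots \rt F_1 \rt F_0 \rt M \rt 0$ of $M$ by (finite) graded free left $A_n(K)$-modules, so that $\Tor^R_\nu(M,N) = H_\nu(\mathbb{F}\otimes_R N)$ as graded $A_n(K)$-modules. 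Each term $F_p \otimes_R N$ is a finite direct sum of shifts $N(a_{p,k})$, and here is the one nontrivial input: $N$ is generalized Eulerian, so by Property \ref{property2} a nonzero shift $N(a)$ fails to be generalized Eulerian unless $a=0$; however what we actually need is only that each $F_p\otimes_R N$ is generalized Eulerian \emph{as an $A_n(K)$-module with the tensor-product $\partial_i$-action}, and one must check that this action on $F_p \otimes_R N \cong \bigoplus_k N(a_{p,k})$ is (up to the identifications) the shifted action, for which the Euler operator identity $\E_n(m\otimes n) = (\E_n m)\otimes n + m\otimes(\E_n n)$ combined with $\E_n$ acting as multiplication by degree on the free module part does the job.

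Granting that each $F_p\otimes_R N$ is generalized Eulerian, the complex $\mathbb{F}\otimes_R N$ is a complex of generalized Eulerian $A_n(K)$-modules; its boundary maps are $A_n(K)$-linear and homogeneous, so all cycles $Z_\nu$ and boundaries $B_\nu$ are graded $A_n(K)$-submodules of generalized Eulerian modules, hence generalized Eulerian by Property \ref{property1} (submodules of generalized Eulerian modules are generalized Eulerian, being the $M_1$ in a short exact sequence), and then $\Tor^R_\nu(M,N) = Z_\nu/B_\nu$ is generalized Eulerian by Property \ref{property1} again (it is the $M_3$ in the short exact sequence $0 \rt B_\nu \rt Z_\nu \rt Z_\nu/B_\nu \rt 0$, and $B_\nu \subseteq Z_\nu$ with $Z_\nu$ generalized Eulerian forces the quotient to be generalized Eulerian). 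This is essentially a formal diagram chase once the term-by-term statement is in hand.

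The step I expect to be the main obstacle is precisely verifying that $F_p\otimes_R N$, with the $A_n(K)$-structure coming from the Leibniz rule on tensor products, is genuinely generalized Eulerian and, more delicately, getting the degree bookkeeping exactly right so that no spurious shift is introduced — because Property \ref{property2} shows that generalized Eulerianness is extremely sensitive to shifts, any off-by-one in how $\deg \partial_i = -1$ interacts with the free basis elements of $F_p$ would be fatal. Concretely, if $e$ is a homogeneous basis element of $F_p$ of degree $d$, then $e\otimes n$ should have degree $d + |n|$, and one computes $\E_n(e\otimes n) = (\E_n e)\otimes n + e\otimes (\E_n n) = d(e\otimes n) + e\otimes(\E_n n)$, so $(\E_n - (d+|n|))(e\otimes n) = e\otimes (\E_n - |n|)n$, and iterating, $(\E_n - (d+|n|))^a (e\otimes n) = e\otimes (\E_n-|n|)^a n$, which vanishes for $a\gg 0$ since $N$ is generalized Eulerian; extending additively over a basis of $F_p$ and using that the $\partial_i$ act diagonally on the $F_p$-part of $F_p\otimes_R N$ gives the claim. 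I would present this computation carefully and then invoke Property \ref{property1} for the homology, which finishes the proof.
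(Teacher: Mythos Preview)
Your approach has a genuine gap at exactly the step you flagged as the main obstacle: the modules $F_p\otimes_R N$ are \emph{not} generalized Eulerian, and your computation purporting to show they are contains an error.

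The mistake is the assertion $(\E_n e)\otimes n = d\,(e\otimes n)$ for a homogeneous $A_n(K)$-basis element $e\in F_p$ of degree $d$. This would require $\E_n\cdot e = d\cdot e$ in $F_p$, i.e.\ that free $A_n(K)$-modules are Eulerian on basis elements. They are not: take $F_p=A_n(K)$ and $e=1$ (degree $0$); then $\E_n\cdot 1=\E_n\neq 0$ in $A_n(K)$. More concretely, take $M=N={}^lR$. A graded free $A_n(K)$-resolution of ${}^lR$ has $F_0=A_n(K)$, and $F_0\otimes_R R\cong A_n(K)$ as a left $A_n(K)$-module (via $a\otimes 1\mapsto a$, using $\partial_i\cdot 1=0$ in $R$). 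But $A_n(K)$ is not generalized Eulerian: $(\E_n-0)^a\cdot 1=\E_n^{\,a}\neq 0$ for every $a\geq 1$. So the term-by-term claim fails already for $p=0$, and the subquotient argument via Property~\ref{property1} cannot get started. (You also slip in ``$\partial_i$ act diagonally on the $F_p$-part,'' which is false for the same reason: left multiplication by $\partial_i$ does not preserve the $R$-summands $R\cdot\partial^\alpha$ of $A_n(K)$.)

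The paper avoids this entirely by passing to $A_{2n}(K)$. One regards $M\otimes_K N$ as a left $S=A_{2n}(K)$-module in variables $X_1,\dots,X_n,Y_1,\dots,Y_n$ and uses Bj\"ork's graded isomorphism
\[
\Tor^R_\nu(M,N)\;\cong\;H_\nu\big(Y_1-X_1,\dots,Y_n-X_n;\;M\otimes_K N\big).
\]
A direct binomial computation with $\E_X+\E_Y$ shows $M\otimes_K N$ is generalized Eulerian over $S$. After the linear change of variables $Z_i=X_i$, $Z_{n+j}=X_j-Y_j$ (which preserves the Euler operator by Proposition~\ref{change-equal}), Theorem~\ref{induct}(2) gives that the Koszul homology in $Z_{n+1},\dots,Z_{2n}$ is generalized Eulerian over $A_n(K)$, which is the claim. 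The point is that $M\otimes_K N$, unlike $F_p\otimes_R N$, genuinely inherits the generalized Eulerian property, because both tensor factors do.
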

\begin{remark}
The proof of Theorem \ref{tor-R} will show that even if $M, N$ are \textit{Eulerian} $A_n(K)$-modules it does NOT follow that $\Tor^R_\nu(M,N)$  are \textit{Eulerian.}
\end{remark}
We now give
\begin{proof}[Proof of Theorem \ref{tor-R}]
We proceed on the lines of argument given in  the proof of Theorem 1.6.4 given in \cite{Bjork}. 

Let $T = A_n(K) =  K<X_1,\cdots, X_n, \partial_1, \cdots, \partial_n>$ be the original ring we are considering. Let $T^\prime = K<Y_1, \cdots, 
Y_n, \delta_1,\ldots, \delta_n>$ where  $\delta_j = \partial/\partial Y_j$ be another copy of $A_n(K)$. Also
consider the Weyl algebra $S = A_{2n}(K) = K<X_1,\cdots, X_n, Y_1, \cdots, 
Y_n, \partial_1, \cdots, \partial_n, \delta_1,\ldots, \delta_n>$, here $\partial_i = \partial/\partial X_i$ and $\delta_j = \partial/\partial Y_j$.

We consider $N$ as $T^\prime$-module with action of $Y_j$ to be identical as that of $X_j$. 

Now $M \otimes_K N$ can be given a left $S$-module structure as follows
\begin{align*}
X_i(m\otimes n) &= (X_i m)\otimes n,  &Y_j(m\otimes n) &= m\otimes (Y_jn) \\
\partial_i(m\otimes n) &= (\partial_i m)\otimes n,   &\delta_j(m\otimes n) &= m\otimes (\delta_jn).
\end{align*}
Then  Bj$\ddot{o}$rk gives a an isomorphism
\begin{equation*}
\Tor^R_\nu(M, N) \cong H_\nu( Y_1-X_1,\ldots Y_n-X_n, M\otimes_K N  ). \tag{$*$}
\end{equation*}
(Here $M\otimes_K N$ is considered as a $S$-module and $ H_\nu( Y_1-X_1,\ldots Y_n-X_n, M\otimes_K N  ) $
is the $\nu^{th}$-Koszul homology of the $S$-module $M\otimes_K N$ \wrt \  $Y_1-X_1,\ldots, Y_n-X_n$.

\textit{Important Observations:} 
\begin{enumerate}
\item
If $M, N$ are graded $T$-modules then $M\otimes_K N$ is a graded $S$-module.
\item
The isomorphism $(*)$ is graded and of degree zero.
\end{enumerate}
 \textit{Claim:} If $M, N$ are generalized Eulerian $T$-modules then $M\otimes_K N$ is a generalized Eulerian $S$-module.
 
 Note $N$ is a generalized Eulerian $T^\prime$-module.
 Let $u = m\otimes n$ where $m \in M$ and $n \in N$ are homogeneous. Note $|u| = |m| + |n|$. Let $\E_X =  \X^{tr}\partial_X$ be the Eulerian operator \wrt \ $\X$ and let
 $\E_Y$ be the Eulerian operator \wrt \ $\Y$.
 
 As $M$ is a generalized Eulerian $T$-module and $N$ is a generalized Eulerian $T^\prime$-module there exists $a,b \geq 1$ such that
 \[
 (\E_X - |m|)^a m = 0 \quad \text{and} \quad (\E_Y - |n|)^b n = 0. 
 \]
Set $\alpha = \E_X - |m|$ and $\beta = \E_Y - |n|$. Note $\E_X + \E_Y$ is the Eulerian operator on $S$. Further note that $\E_X, \E_Y$ commute with each other. So $\alpha,\beta $ commute with each other.

We now observe that

\begin{align*}
(\alpha + \beta )^{a+b + 1}m\otimes n &= 
\left(\sum_{k = 0}^{a+b+1}\binom{a+b+1}{k} \alpha^{a+b+1 - k}\beta^k \right)(m\otimes n)\\
&= \sum_{k = 0}^{a+b+1}\left(\binom{a+b+1}{k} \alpha^{a+b+1 - k}\beta^k \cdot (m\otimes n) \right) \\
&= \sum_{k = 0}^{a+b+1}\binom{a+b+1}{k}\left( (\alpha^{a+b+1 - k}m) \otimes (\beta^k n) \right) \\
&= 0.
\end{align*}
(The last equality holds since for all $k$ with $0 \leq k \leq a + b + 1$ we have that either $a+b+1 - k \geq a$ OR $ k \geq b$).
It follows that $M\otimes_K N$ is a generalized Eulerian $S$-module.

Consider the change of variables $Z_i = X_i $ for $i = 1,\ldots,n$ and $Z_{n+ j} = X_j - Y_j$ for $j = 1,\ldots,n$. By  \ref{change-equal} and Theorem \ref{induct}(2) it follows that \\ $H_\nu(Z_{n+1}, \ldots, Z_{2n} ; M\otimes_K N)$ is a generalized Eulerian $T$-module. So by $(*)$ it follows that $\Tor^R_\nu(M, N)$ is a generalized Eulerian $T$-module for all $\nu \geq 0$.
\end{proof}
Immediately we get the following result which contains Theorem \ref{second} as a special case. 
\begin{theorem}\label{main-second}
Let $K$ be a field of characteristic zero. Let $R = K[X_1,\ldots, X_n]$ be standard graded.
Let $\mathcal{T}, \mathcal{G}$ be  graded Lyubeznik functor's on $^*Mod(R)$.
Then $\Tor^R_\nu(\mathcal{F}(R), \mathcal{G}(R) )$ is a holonomic  generalized Eulerian $A_n(K)$-module
for all $\nu \geq 0$. 
In particular $H^i_\m(\Tor^R_\nu(\mathcal{F}(R), \mathcal{G}(R) )) \cong E(n)^{a_{i,\nu}} $ 
for some (finite) $a_{i,\nu} \geq 0$.
\end{theorem}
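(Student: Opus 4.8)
The plan is to deduce Theorem \ref{main-second} directly from Theorem \ref{tor-R} together with results already established in the excerpt. First I would observe that by Lyubeznik's theorem (\cite[2.9]{Lyubeznik}), both $\mathcal{F}(R)$ and $\mathcal{G}(R)$ are holonomic $A_n(K)$-modules; more precisely, since $\mathcal{F}$ and $\mathcal{G}$ are \emph{graded} Lyubeznik functors, $\mathcal{F}(R)$ and $\mathcal{G}(R)$ are graded holonomic $A_n(K)$-modules. By the discussion preceding Theorem \ref{tor-R} (the construction from \cite[p.\ 18]{Bjork}), $\Tor^R_\nu(\mathcal{F}(R), \mathcal{G}(R))$ carries a natural structure of a graded $A_n(K)$-module, and by \cite[Theorem 1.6.4]{Bjork} it is holonomic for all $\nu \geq 0$.

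Next I would invoke Theorem \ref{third}: since $R$ is an Eulerian (hence generalized Eulerian) $A_n(K)$-module, both $\mathcal{F}(R)$ and $\mathcal{G}(R)$ are generalized Eulerian. Then Theorem \ref{tor-R} applies verbatim with $M = \mathcal{F}(R)$ and $N = \mathcal{G}(R)$, giving that $\Tor^R_\nu(\mathcal{F}(R), \mathcal{G}(R))$ is a generalized Eulerian $A_n(K)$-module for all $\nu \geq 0$. Combined with the previous paragraph, this establishes the first assertion.

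For the ``in particular'' statement, I would note that $H^i_\m(-)$ is itself (the composite defining) a graded Lyubeznik functor, so $H^i_\m\bigl(\Tor^R_\nu(\mathcal{F}(R), \mathcal{G}(R))\bigr)$ is again a graded $A_n(K)$-module; by Corollary \ref{property5} (applied to the generalized Eulerian module $\Tor^R_\nu(\mathcal{F}(R), \mathcal{G}(R))$) it is generalized Eulerian, and by the finiteness part of Lyubeznik's result together with \cite[Proposition 5.5]{MaZhang} it is a finite direct sum of shifted copies $^*E(n_j)$ of the $^*$injective hull of $R/\m$. Exactly as in the proof of the Corollary following Theorem \ref{third}, Property \ref{property1} forces each summand $E(n_j)$ to be generalized Eulerian, and then Property \ref{property2} forces $n_j = n$. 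Hence $H^i_\m\bigl(\Tor^R_\nu(\mathcal{F}(R), \mathcal{G}(R))\bigr) \cong E(n)^{a_{i,\nu}}$ for some finite $a_{i,\nu} \geq 0$.

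The proof is therefore essentially a bookkeeping assembly of Theorems \ref{third} and \ref{tor-R} with the already-proved structural corollaries; there is no genuine obstacle at this stage, since all the analytic content (stability of the generalized Eulerian property under $\otimes_K$, change of variables, and Koszul homology in the $Z_{n+1},\ldots,Z_{2n}$ directions) has been discharged in the proof of Theorem \ref{tor-R}. The only point requiring minor care is checking that the $A_n(K)$-module structure on $\Tor^R_\nu$ used in Theorem \ref{tor-R} is the same (graded, degree-zero) structure appearing in the isomorphism $(*)$ of that proof, which was already recorded in the ``Important Observations'' there; so Theorem \ref{main-second} follows at once.
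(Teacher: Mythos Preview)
Your proof is correct and follows essentially the same route as the paper's own proof: Lyubeznik's theorem and \cite[1.6.4]{Bjork} for holonomicity, Theorem \ref{third} plus Theorem \ref{tor-R} for the generalized Eulerian property, and then the observation that $H^i_\m(-)$ is itself a graded Lyubeznik functor to handle the ``in particular'' clause. The only difference is that you spell out the final step (Ma--Zhang's Proposition 5.5, Lyubeznik's finiteness, and Properties \ref{property1}--\ref{property2}) more explicitly than the paper, which simply asserts ``it follows''.
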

\begin{proof}
By Lyubeznik  \cite[2.9]{Lyubeznik},  we get that $\mathcal{F}(R), \mathcal{G}(R)$ are holonomic $A_n(K)$ modules. So by \cite[1.6.4]{Bjork} we get that $\Tor^R_\nu(\mathcal{F}(R), \mathcal{G}(R) )$ is a holonomic $A_n(K)$-module.

 By \ref{third} we get that $\mathcal{F}(R), \mathcal{G}(R)$ are generalized Eulerian $A_n(K)$-modules. So by Theorem \ref{tor-R} we get that $\Tor^R_\nu(\mathcal{F}(R), \mathcal{G}(R) )$ is a generalized Eulerian $A_n(K)$-module for all $\nu \geq 0$.  
 
 Notice $H^i_\m(-)$ is also a graded Lyubeznik functor. In particular \\ 
 $H^i_\m(\Tor^R_\nu(\mathcal{F}(R), \mathcal{G}(R) )) $ is also a generalized Eulerian 
 $A_n(K)$-module.
  Furthermore it is a holonomic $A_n(K)$-module. It follows that  $H^i_\m(\Tor^R_\nu(\mathcal{F}(R), \mathcal{G}(R) )) \cong E(n)^{a_{i,\nu}} $ for some (finite)  $a_{i,\nu} \geq 0$.
\end{proof}
\section{Proof of Theorem \ref{first}}
In this section we state and prove a more general result which implies Theorem \ref{first}. This result follows easily from Theorem \ref{first-main}.

Let $M$ be a left $A_n(K)$-module. Set $M^\sharp$ to be the standard right $A_n(K)$-module associated to $M$, see \ref{left-right}.
The main result of this section is
\begin{theorem}\label{first-main}
Fix $\nu \geq 0$.
Let $M,N$ be  holonomic generalized Eulerian left $A_n(K)$-modules. Then 
$\Tor^{A_n(K)}_\nu(M^\sharp, N)$ is concentrated  in degree $-n$, i.e., \\
$\Tor^{A_n(K)}_\nu(M^\sharp, N)_j = 0 $ for $j \neq -n$.
\end{theorem}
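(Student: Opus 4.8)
The plan is to run the argument from the proof of Theorem~\ref{tor-R} one step further: there, to compute $\Tor^R$, one only has to kill the $X$-directions of the diagonal, whereas here, to compute $\Tor^{A_n(K)}$, one must kill both the $X$- and the $\partial$-directions, and it is this extra step that brings Theorem~\ref{induct}(3) (rather than Theorem~\ref{induct}(2)) into play. Concretely, I would keep the notation $S=A_{2n}(K)$, with commuting sets of variables $X_1,\dots,X_n,Y_1,\dots,Y_n$ and derivations $\partial_i=\partial/\partial X_i$ and $\delta_j=\partial/\partial Y_j$, from the proof of Theorem~\ref{tor-R}, and regard $M\otimes_K N$ as the graded left $S$-module in which $X_i,\partial_i$ act on $M$ and $Y_j,\delta_j$ act on $N$ (as $X_j$ and $\partial_j$). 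Using the standard Koszul resolution of $A_n(K)$ as a bimodule --- the device in Bj\"ork's computation of $\Tor$, \cite[p.\ 18]{Bjork} --- one obtains a degree-zero graded isomorphism
\[
\Tor^{A_n(K)}_\nu(M^\sharp,N)\ \cong\ H_\nu\bigl(X_1-Y_1,\dots,X_n-Y_n,\ \partial_1+\delta_1,\dots,\partial_n+\delta_n\ ;\ M\otimes_K N\bigr),
\]
the Koszul homology of the $S$-module $M\otimes_K N$ with respect to these $2n$ elements. The plus sign in $\partial_i+\delta_i$ is produced by the $\sharp$-twist, since $\tau(\partial_i)=-\partial_i$ while $\tau(X_i)=X_i$; and the $2n$ elements commute pairwise in $S$, the only bracket needing a check being $[X_i-Y_i,\partial_i+\delta_i]=-1+1=0$. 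That the isomorphism is of degree zero is checked exactly as in the $\Tor^R$ case treated inside the proof of Theorem~\ref{tor-R}; as a sanity check, for $M=N={}^{l}R$ (so $M^\sharp=R^r$) both sides equal $K$ concentrated in degree $-n$ when $\nu=n$ and vanish otherwise.

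Having this, the reduction to Theorem~\ref{induct}(3) goes as follows. By the proof of Theorem~\ref{tor-R}, $M\otimes_K N$ is a generalized Eulerian $S$-module, the Euler operator on $S$ being $\E_X+\E_Y$. I would apply the linear change of variables $Z_i=X_i$ and $Z_{n+j}=X_j-Y_j$ ($1\le i,j\le n$), which leaves the Euler operator unchanged by Proposition~\ref{change-equal}; a short chain-rule computation gives $\partial/\partial Z_i=\partial_i+\delta_i$ for $1\le i\le n$ (and $\partial/\partial Z_{n+j}=-\delta_j$). After relabelling so that $Z_{n+1},\dots,Z_{2n}$ are listed first, as $w_1,\dots,w_n$, and $Z_1,\dots,Z_n$ last, as $w_{n+1},\dots,w_{2n}$, the Koszul homology above becomes
\[
H_\nu\bigl(w_1,\dots,w_n,\ \partial_{n+1},\dots,\partial_{2n}\ ;\ M\otimes_K N\bigr),
\]
with $M\otimes_K N$ a generalized Eulerian $A_{2n}(K)$-module in the coordinates $w_1,\dots,w_{2n}$. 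Theorem~\ref{induct}(3), applied with ambient Weyl algebra $A_{2n}(K)$ and $r=n+1$, then shows this is a $K$-vector space concentrated in degree $-2n+(n+1)-1=-n$; combined with the degree-zero isomorphism of the first step, this yields $\Tor^{A_n(K)}_\nu(M^\sharp,N)_j=0$ for $j\neq -n$, as desired.

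The one genuinely non-routine point is the Koszul description of the first step: pinning down precisely the $2n$ elements $X_i-Y_i$ and $\partial_i+\delta_i$ (the $\sharp$-twist being what turns the expected minus into a plus) and verifying that Bj\"ork's isomorphism respects the internal grading with no shift. Everything afterwards --- commuting the $2n$ elements, the linear change of variables, and the arithmetic $-2n+(n+1)-1=-n$ --- is exactly parallel to the proof of Theorem~\ref{tor-R}, followed by a single invocation of Theorem~\ref{induct}(3). If one prefers to avoid the bimodule resolution, one may instead regard the $2n$-term Koszul complex as the total complex of the tensor product of the Koszul complex on $w_1,\dots,w_n$ with that on $\partial_{n+1},\dots,\partial_{2n}$; the spectral sequence obtained by first taking homology in the $w_1,\dots,w_n$ directions has $E^2_{p,q}=H_p(\partial\,;\,\Tor^R_q(M,N))$ (the residual action of $\partial_i+\delta_i$ on $\Tor^R_\bullet(M,N)$ being the standard $A_n(K)$-action recalled at the start of this section), and each $E^2_{p,q}$ is concentrated in degree $-n$ by Theorem~\ref{tor-R} together with Theorem~\ref{induct}(3) with $r=1$; hence so is the abutment $\Tor^{A_n(K)}_\nu(M^\sharp,N)$.
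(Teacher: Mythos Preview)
Your proposal is correct and follows essentially the same route as the paper: establish the degree-zero Bj\"ork isomorphism expressing $\Tor^{A_n(K)}_\nu(M^\sharp,N)$ as Koszul homology of the generalized Eulerian $S$-module $M^\sharp\otimes_K N\cong M\otimes_K N$ with respect to $X_i-Y_i,\ \partial_i+\delta_i$, then perform a linear change of variables (the paper takes $Z_i=X_i+Y_i,\ W_i=X_i-Y_i$ rather than your $Z_i=X_i,\ Z_{n+j}=X_j-Y_j$, but either works) and invoke Theorem~\ref{induct}(3) in $A_{2n}(K)$ with $r=n+1$. Your added spectral-sequence alternative via $E^2_{p,q}=H_p(\partial;\Tor^R_q(M,N))$ is a nice independent check not in the paper.
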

\begin{proof}
We have to first carefully de-construct the proof by Bj$\ddot{o}$rk, (see proof of Theorem 1.6.6, \cite{Bjork}) showing that if $U$ is a holonomic  right $A
_n(K)$-module and $V$ is a holonomic  left
 $A_n(K)$-module  then $\Tor^{A_n(K)}(U, V)$ is a finite dimensional $K$-vector space for all $\nu \geq 0$.
 
 As in Theorem \ref{main-second} we consider the following Weyl-algebra's: \\
 (a) $T = A_n(K) =  K<X_1,\cdots, X_n, \partial_1, \cdots, \partial_n>$ be the original ring we are considering. \\
 (b) Let $T^\prime = K<Y_1, \cdots, 
Y_n, \delta_1,\ldots, \delta_n>$ where  $\delta_j = \partial/\partial Y_j$ be another copy of $A_n(K)$.  \\
(c) Also
consider the Weyl algebra 
$$S = A_{2n}(K) = K<X_1,\cdots, X_n, Y_1, \cdots, Y_n, \partial_1, \cdots, \partial_n, \delta_1,\ldots, \delta_n>,$$
 here $\partial_i = \partial/\partial X_i$ and $\delta_j = \partial/\partial Y_j$.

We consider $V$ as $T^\prime$-modules with action of $Y_j$ to be identical as that of $X_j$. 

Now $U \otimes_K V$ can be given a \textit{left} $S$-module structure as follows
\begin{enumerate}
\item
$X_i(u\otimes v) = (X_i u)\otimes v$,
\item
$Y_j(u\otimes v) = u\otimes (vn)$,
\item
$\partial_i(u\otimes v) = - ( u \partial_i)\otimes v$,
\item
$\delta_j(u\otimes v) = m\otimes (\delta_jn)$.
\end{enumerate}
The minus sign in (3) is essential if we have to give left $S$-module structure on $U\otimes_K V$. In  proof of  Theorem \cite[1.6.4]{Bjork} it is shown that $U\otimes_K V$ is a holonomic $S$-module.

Now if we give $M^\sharp\otimes_K N$ the left $S$-module structure as above then only action (3) deserves a comment. Notice
\begin{align*}
\partial_i(m\otimes n) &= - ( m \partial_i)\otimes n,\\
&= -(\tau(\partial_i) m)\otimes n, \\
&= -(-\partial_i m)\otimes n, \\
&= (\partial_i m)\otimes n. \tag{$\dagger$}
\end{align*}

\textit{Claim:} If $M, N$ are generalized Eulerian $T$-modules then $M^\sharp\otimes_K N$ is a generalized Eulerian $S$-module.
 
 Note $N$ is a generalized Eulerian $T^\prime$-module.

As $M$ is a generalized Eulerian $T$-module and $N$ is generalized Eulerian $T^\prime$-module there exists $a,b \geq 1$ such that
 \[
 (\E_X - |m|)^a m = 0 \quad \text{and} \quad (\E_Y - |n|)^b n = 0. 
 \]
Set $\alpha = \E_X - |m|$ and $\beta = \E_Y - |n|$. Note $\E_X + \E_Y$ is the Eulerian operator on $S$. Further note that $\E_X, \E_Y$ commute with each other. So $\alpha,\beta $ commute with each other.

We now observe that

\begin{align*}
(\alpha + \beta )^{a+b + 1}&m\otimes n = 
\left(\sum_{k = 0}^{a+b+1}\binom{a+b+1}{k} \alpha^{a+b+1 - k}\beta^k \right)(m\otimes n)\\
&= \sum_{k = 0}^{a+b+1}\left(\binom{a+b+1}{k} \alpha^{a+b+1 - k}\beta^k \cdot (m\otimes n) \right)   \\
&= \sum_{k = 0}^{a+b+1}\binom{a+b+1}{k}\left( (\alpha^{a+b+1 - k}m) \otimes (\beta^k n) \right) \   \ \text{(use equation $(\dagger)$)},  \\
&= 0.
\end{align*}
(The last equality holds since for all $k$ with $0 \leq k \leq a + b + 1$ we have that either $a+b+1 - k \geq a$ OR $ k \geq b$).
It follows that $M^\sharp\otimes_K N$ is a generalized Eulerian $S$-module.

We note the elements $X_1 - Y_1,\cdots, X_n - Y_n, \partial_1 + \delta_1, \cdots,
\partial_n + \delta_n$ commute with each other. By proof of \cite[Theorem 1.6.6]{Bjork}  for all $\nu \geq 0$ we have a degree zero isomorphism of vector spaces
\[
\Tor^{A_n(K)}_\nu(M^\sharp, N) \cong H_\nu(X_1 - Y_1,\cdots, X_n - Y_n, \partial_1 + \delta_1, \cdots,\partial_n + \delta_n; M^\sharp\otimes_K N).
\]
(the latter is Koszul homology of $M^\sharp\otimes_K N$  \wrt \ commuting operators  $X_1 - Y_1,\cdots, X_n - Y_n, \partial_1 + \delta_1, \cdots,
\partial_n + \delta_n$).

Consider the change of variables
\[
Z_i = X_i + Y_i \quad W_i = X_i - Y_i \quad \text{for} \ i = 1,\ldots, n.
\] 
We note that 
\[
\frac{\partial}{\partial Z_i} = 
\frac{1}{2} \left( \frac{\partial}{\partial X_i} + \frac{\partial}{\partial Y_i}  \right).
\]
\s Thus for all $\nu \geq 0$ we have an isomorphism of graded $K$-vector spaces,
\begin{equation*}
\begin{split}
H_\nu(X_1 - Y_1,\cdots, &X_n - Y_n, \partial_1 + \delta_1, \cdots,\partial_n + \delta_n; M^\sharp\otimes_K N)   \\
\cong H_\nu(W_1, \ldots, &W_n, \partial/\partial Z_1, \ldots, \partial/\partial Z_n \colon M^\sharp\otimes_K N). 
\end{split} 
\end{equation*}
The result now follows from Theorem \ref{induct}(3).
\end{proof}

Immediately we get the following result which contains Theorem \ref{first} as a special case. 
\begin{theorem}\label{main-first}
Let $K$ be a field of characteristic zero. Let $R = K[X_1,\ldots, X_n]$ be standard graded.
Let $\mathcal{T}, \mathcal{G}$ be  graded Lyubeznik functors on $^*Mod(R)$. Fix $\nu \geq 0$.
Then   $\Tor^{A_n(K)}_\nu(\mathcal{F}(R)^\sharp, \mathcal{G}(R) )$ is a
 finite dimensional, graded $K$-vector-space
 concentrated
in degree $-n$
\end{theorem}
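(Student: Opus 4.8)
The plan is to deduce Theorem \ref{main-first} from Theorem \ref{first-main} in exactly the same manner in which Theorem \ref{main-second} was deduced from Theorem \ref{tor-R}. First I would apply Lyubeznik \cite[2.9]{Lyubeznik} to conclude that $\mathcal{F}(R)$ and $\mathcal{G}(R)$ are holonomic left $A_n(K)$-modules; by the standard $\sharp$-construction (see \ref{left-right}) the right $A_n(K)$-module $\mathcal{F}(R)^\sharp$ is then holonomic with the same underlying grading. Hence by Bj\"ork \cite[1.6.6]{Bjork} the space $\Tor^{A_n(K)}_\nu(\mathcal{F}(R)^\sharp, \mathcal{G}(R))$ is a finite dimensional $K$-vector space for each $\nu \geq 0$, and computing it from a graded free resolution of $\mathcal{F}(R)$ by graded free $A_n(K)$-modules shows that it is a graded $K$-vector space.

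Next I would invoke Theorem \ref{third} to see that $\mathcal{F}(R)$ and $\mathcal{G}(R)$ are generalized Eulerian $A_n(K)$-modules. Therefore $M := \mathcal{F}(R)$ and $N := \mathcal{G}(R)$ satisfy the hypotheses of Theorem \ref{first-main}: they are holonomic generalized Eulerian left $A_n(K)$-modules. Applying Theorem \ref{first-main} to this pair gives at once that $\Tor^{A_n(K)}_\nu(\mathcal{F}(R)^\sharp, \mathcal{G}(R))$ is concentrated in degree $-n$, i.e. $\Tor^{A_n(K)}_\nu(\mathcal{F}(R)^\sharp, \mathcal{G}(R))_j = 0$ for all $j \neq -n$. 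Combined with the finiteness and gradedness obtained in the first step, this is the full assertion.

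There is no genuine obstacle at this level: every step is a direct appeal to something already established --- Lyubeznik's holonomicity, Theorem \ref{third}, Bj\"ork's finiteness theorem, and Theorem \ref{first-main} (whose own proof, via the reduction to Koszul homology of $M^\sharp\otimes_K N$ over $S = A_{2n}(K)$ with respect to $X_1 - Y_1,\ldots,X_n - Y_n,\partial_1+\delta_1,\ldots,\partial_n+\delta_n$ and the change of variables $Z_i = X_i + Y_i$, $W_i = X_i - Y_i$, rests ultimately on Theorem \ref{induct}(3)). The only point worth a brief remark is that the $\sharp$-functor preserves both holonomicity and the grading, so that the hypotheses of the cited theorems genuinely transfer to $\mathcal{F}(R)^\sharp$; this is recorded in \ref{left-right}.
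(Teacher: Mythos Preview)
Your proposal is correct and follows essentially the same route as the paper: invoke Lyubeznik \cite[2.9]{Lyubeznik} for holonomicity, Bj\"ork \cite[1.6.6]{Bjork} for finite-dimensionality, Theorem \ref{third} for the generalized Eulerian property, and then Theorem \ref{first-main} for the concentration in degree $-n$. Your write-up is in fact slightly more careful than the paper's (you make explicit that $\sharp$ preserves holonomicity and the grading, and that the Tor is computed via a graded free resolution), but the argument is the same.
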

\begin{proof}
By Lyubeznik  \cite[2.9]{Lyubeznik},  we get that $\mathcal{F}(R), \mathcal{G}(R)$ are holonomic $A_n(K)$ modules.  So by \cite[1.6.6]{Bjork} we get that 
$\Tor^{A_n(K)}_\nu(\mathcal{F}(R), \mathcal{G}(R) )$ is a  finite dimensional $K$-vector space.

 By \ref{third} we get that $\mathcal{F}(R), \mathcal{G}(R)$ are generalized Eulerian $A_n(K)$-modules. The result now follows from Theorem \ref{first-main}.
\end{proof}

\section{Conjecture \ref{conj}}
In this section we give evidence which supports our conjecture \ref{conj}.

\s \label{bjork-ext} \emph{We first recall the proof by Bj$\ddot{o}$rk} which shows that if  $M, N$ are holonomic $A_n(K)$-modules then $\Ext^\nu_{A_n(K)}(M, N)$ is a finite dimensional $K$-vector space for all $\nu \geq 0$.

$\bullet$ \emph{Step-1:} Consider the dual of $M$ namely $M^\dagger = \Ext^n_{A_n(K)}(M, A_n(K))$. Then $M^\dagger$ is a right holonomic $A_n(K)$-module, see 7.12, Chapter 2, \cite{Bjork}.

$\bullet$ \emph{Step-2:} For all $\nu \geq 0$ there is an isomorphism  
(see 7.15, Chapter 2, \cite{Bjork}):
$$\Ext^\nu_{A_n(K)}(M, N) \cong \Tor^{A_n(K)}_{n-\nu}(M^\dagger, N). $$
It now follows from Theorem 6.6, Chapter 1, \cite{Bjork};  that $\Ext^\nu_{A_n(K)}(M, N)$ is a finite dimensional $K$-vector space for all $\nu \geq 0$.

\begin{remark}
If $M, N$ are graded holonomic $A_n(K)$-modules then the isomorphism in Step 2 is homogeneous of degree zero. 
This can be seen by inspecting the proof; (also note the isomorphism given in Proposition 4.14, Chapter 2, \cite{Bjork} is also homogeneous of degree zero). 
\end{remark}

We first indicate another conjecture which solves our conjecture \ref{conj}.
\begin{conjecture}\label{dual}
Let $M$ be a non-zero, left, holonomic, graded, generalized Eulerian  $A_n(K)$-module. Then $^\sharp M^\dagger (+n)$ is a left generalized Eulerian $A_n(K)$-module.
\end{conjecture}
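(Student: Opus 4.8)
The plan is to recast the generalized Eulerian condition as a global nilpotency statement for a single operator, and then transport that nilpotency through the holonomic dual $(-)^\dagger=\Ext^n_{A_n(K)}(-,A_n(K))$.

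\textbf{Step 1 (nilpotency reformulation).} On graded left $A_n(K)$-modules (with degree-zero morphisms) attach to each module $V$ the operator $\Theta_V:=\mathcal{E}_n-\deg_V$, where $\deg_V$ acts as multiplication by $j$ on $V_j$. Using $[\mathcal{E}_n,X_i]=X_i$ and $[\mathcal{E}_n,\partial_i]=-\partial_i$ one checks that $\Theta_V$ is $A_n(K)$-linear and that $\Theta$ is a natural transformation of the identity functor. For homogeneous $z$ of degree $d$ we have $\Theta_V^a z=(\mathcal{E}_n-d)^a z$, so a finitely generated graded $A_n(K)$-module $V$ (in particular a holonomic one) is generalized Eulerian if and only if $\Theta_V$ is \emph{globally} nilpotent; one takes the exponent to bound the ones needed on a finite homogeneous generating set. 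Thus $\Theta_M^N=0$ for some $N$.

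\textbf{Step 2 (pushing $\Theta$ through $\dagger$).} As $M$ is holonomic, $M^\dagger$ is a holonomic right $A_n(K)$-module concentrated in cohomological degree $n$ (Step 1 of \ref{bjork-ext}); fix a graded free resolution $\mathbb{F}_\bullet\to M$. By naturality, $(\Theta_{\mathbb{F}_\bullet})$ is a chain map lifting $\Theta_M$, so $\Ext^n(\Theta_M,A_n(K))$ is computed on cohomology by precomposition with $\Theta_{\mathbb{F}_n}$ on $\Hom_{A_n(K)}(\mathbb{F}_\bullet,A_n(K))$. The delicate point is that this is \emph{not} the operator induced by post-composition with $\Theta_{A_n(K)}$: for a homogeneous cochain $\psi$ of internal degree $d$ one has $\psi\circ\Theta_{\mathbb{F}_n}=\Theta_{A_n(K)}\circ\psi+d\psi$, so passing to cohomology,
$$\Ext^n(\Theta_M,A_n(K))=\Ext^n(M,\Theta_{A_n(K)})+D_{M^\dagger},$$
where $D_{M^\dagger}$ is the internal-degree operator on $M^\dagger$. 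Now on the regular left module $\Theta_{A_n(K)}$ equals right multiplication by $\mathcal{E}_n$ (since $\mathcal{E}_n u=u\mathcal{E}_n+(\deg u)u$), hence $\Ext^n(M,\Theta_{A_n(K)})$ is the right-module action $\rho$ of $\mathcal{E}_n$ on $M^\dagger$. Applying $\Ext^n(-,A_n(K))$ to $\Theta_M^N=0$ gives $(\rho+D_{M^\dagger})^N=0$.

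\textbf{Step 3 (through $\sharp$ and the shift).} Both $\rho$ and $D_{M^\dagger}$ preserve each homogeneous piece $M^\dagger_e$, so $(\rho+e)^N x=0$ for homogeneous $x$ of degree $e$. In $^\sharp M^\dagger$ the left action is $u\cdot x=x\circ\tau(u)$, and $\tau(\mathcal{E}_n)=-\mathcal{E}_n-n$, so $\rho(x)=x\circ\mathcal{E}_n=-(\mathcal{E}_n+n)x$ and therefore $(\rho+e)x=-\bigl(\mathcal{E}_n-(e-n)\bigr)x$. Since $x$ sits in degree $e-n$ inside $^\sharp M^\dagger(+n)$, the relation $(\rho+e)^N x=0$ reads precisely $(\mathcal{E}_n-|x|)^N x=0$ in $^\sharp M^\dagger(+n)$, so $^\sharp M^\dagger(+n)$ is generalized Eulerian. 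Together with the degree-zero isomorphism $\Ext^\nu_{A_n(K)}(M,N)\cong\Tor^{A_n(K)}_{n-\nu}(M^\dagger,N)$ from \ref{bjork-ext} and Theorem \ref{first-main}, this also yields Conjecture \ref{conj}.

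\textbf{What I expect to be hardest.} The whole argument hinges on the cochain-level identity in Step 2: one must notice that pushing the Euler data through $\dagger$ produces the extra operator $D_{M^\dagger}$, and that this error term — combined with $\tau(\mathcal{E}_n)=-\mathcal{E}_n-n$ — is exactly what forces the normalization by $(+n)$ in the statement. Getting the signs and the shift to match requires care; a good consistency check is $M=R$, where $\Theta_R=0$, $M^\dagger\cong R^r(-n)$, $\rho=-D_{M^\dagger}$, and $^\sharp M^\dagger(+n)\cong R$, all as required.
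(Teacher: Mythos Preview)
The paper does \emph{not} prove this statement: it is presented as Conjecture \ref{dual}, and the paper only shows (in \ref{conj2-1}) that its validity would imply Conjecture \ref{conj}, together with a verification of the special case $M={}^lR$ in Lemma \ref{dual-R}. So there is no ``paper's own proof'' to compare against.

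Your argument, however, appears to be a genuine proof. The crucial observation is Step~1: for each graded left $A_n(K)$-module $V$ the operator $\Theta_V=\mathcal{E}_n-\deg_V$ is $A_n(K)$-linear and natural, and on a finitely generated (hence on a holonomic) module the generalized Eulerian condition is equivalent to global nilpotency of $\Theta_V$. This reformulation is exactly what is needed to push the hypothesis through a functor. In Step~2 you correctly track what $\Ext^n(-,A_n(K))$ does to $\Theta$: the cochain identity $\psi\circ\Theta_{\mathbb{F}_n}=\Theta_{A_n(K)}\circ\psi+(\deg\psi)\psi$ is straightforward, and the identification of post-composition with $\Theta_{A_n(K)}$ with the right action of $\mathcal{E}_n$ follows from $[\mathcal{E}_n,u]=|u|\,u$ for homogeneous $u\in A_n(K)$. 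Both summands commute with the (degree-zero, right-linear) differentials of $\Hom_{A_n(K)}(\mathbb{F}_\bullet,A_n(K))$, so the decomposition descends to cohomology, and contravariant functoriality turns $\Theta_M^N=0$ into $(\rho+D_{M^\dagger})^N=0$. Step~3 then is a direct computation using $\tau(\mathcal{E}_n)=-\mathcal{E}_n-n$, and the shift by $+n$ falls out exactly as you say. Your consistency check with $M={}^lR$ matches Lemma \ref{dual-R}.

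In short: the paper leaves this open; your approach via the nilpotent natural endomorphism $\Theta$ settles it, and hence (as the paper itself observes in \ref{conj2-1}) also settles Conjecture \ref{conj}. The only places worth polishing for a write-up are to state explicitly that a graded free resolution by \emph{finitely generated} free modules exists (so that the Hom-complex is genuinely graded and the degree operator is defined), and to spell out the commutation of $\rho$ and $D_{M^\dagger}$ before expanding $(\rho+D_{M^\dagger})^N$ on a homogeneous element.
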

\begin{remark}
We recall that if $N$ is a right $A_n(K)$-module then $^\sharp N$ is the standard
left module associated to $N$.
\end{remark}

\s \label{conj2-1} We now show
\begin{proof}[ How Truth of Conjecture \ref{dual} implies  validity of Conjecture \ref{conj}]
 Set $L = \  ^\sharp M^\dagger$. \\ Then $L^\sharp = M^\dagger$. Also note that $L(+n)^\sharp \cong L^\sharp(+n)$.
 
 If $L(+n)$ is generalized Eulerian then by Theorem \ref{first-main} we get that for all $\nu \geq 0$ the finite dimensional 
 $K$-vector space
 $\Tor^{A_n(K)}_{n - \nu}(L^\sharp(+n), N)$ is concentrated in degree $-n$. 
 It follows that $\Tor^{A_n(K)}_{n - \nu}(L^\sharp, N)$ is concentrated in degree $0$. \\ So
$\Tor^{A_n(K)}_{n - \nu}(M^\dagger, N)$ is concentrated in degree $0$.

By \ref{bjork-ext}(Step 2) we get the result.
 \end{proof}

 \s \label{con}\textit{Convention}. Set $D = A_n(K)$. Let $ ^lD$ (and $D^r$) be $D$ considered as a left $D$-module (right $D$-module).
 Similarly let $ ^lR$ (and $R^r$) be $R$ considered as a left $D$-module (right $D$-module).
 
 \s \label{facts} Let us recall the following well-known facts. \\
 (i) Let $M$ be a left $D$-module. Then $\Hom_D(M,D)$ is a right $D$-module with $D$-action given as follows:
 Let $f \in \Hom_D(M, D)$ and $d \in D$. Define $fd \colon M \rt D$ 
 \[
  (fd)(m) = f(m)d \quad \text{for all} \ m \in M.
 \]
(ii) The map $ \psi \colon \Hom_D( ^lD, D) \rt D^r$ defined by $\psi(f) = f(1)$ is an isomorphism (as right $D$-modules).
 \begin{lemma}\label{dual-R}[with notation as in \ref{bjork-ext} and \ref{con}]
  \[
   (^lR)^\dagger \cong R^r(-n).
  \]
 \end{lemma}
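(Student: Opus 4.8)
The statement to prove is $(^lR)^\dagger \cong R^r(-n)$, where $M^\dagger = \Ext^n_{A_n(K)}(M, A_n(K))$. Write $D = A_n(K)$. The natural strategy is to compute $\Ext^n_D(^lR, D)$ directly from a free resolution of $^lR$ as a left $D$-module, keeping track of gradings throughout. Recall $^lR \cong D/D(\partial_1,\ldots,\partial_n)$, and the sequence $\partial_1,\ldots,\partial_n$ is a (regular) sequence of commuting elements in $D$, so the Koszul complex $K_\bullet(\partial_1,\ldots,\partial_n; D)$ (built from left multiplication) is a free resolution of $^lR$ by graded free left $D$-modules. The only subtlety is the grading: since $\deg \partial_i = -1$, the $p$-th Koszul module $\bigwedge^p D^n$ must be given the grading $\bigoplus_{i_1 < \cdots < i_p} D(p)$ so that the differentials are homogeneous of degree zero and the augmentation $K_0 = D \to {}^lR$ is degree zero. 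In particular $K_n \cong D(n)$.

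**Key steps.** First I would write down the graded Koszul resolution $K_\bullet$ of $^lR$ explicitly, with $K_p = \bigwedge^p D^n$ suitably shifted, and confirm $K_n \cong D(n)$ as a graded left $D$-module. Next, apply $\Hom_D(-, D)$; using fact \ref{facts}(ii), $\Hom_D(D(n), D) \cong D^r(-n)$ as graded right $D$-modules (a shift by $+n$ on the source becomes a shift by $-n$ after dualizing into $D$, because $\Hom_D(D(a), D) \cong D(-a)$ with the evident right action). The dualized complex $\Hom_D(K_\bullet, D)$ is then a Koszul cochain complex for right multiplication by $\partial_1,\ldots,\partial_n$ on $D^r$, with appropriate shifts. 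Its top cohomology $H^n = \Ext^n_D(^lR, D)$ is the cokernel $D^r(-n)/(D^r(-n))(\partial_1,\ldots,\partial_n) = (D/(\partial_1,\ldots,\partial_n)D)^r(-n)$. Finally, identify $D/(\partial_1,\ldots,\partial_n)D$ as a right $D$-module with $R^r$ — this is exactly the isomorphism $R^r = A_n(K)/(\partial)A_n(K)$ recalled in the introduction — yielding $(^lR)^\dagger \cong R^r(-n)$.

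**The main obstacle.** The computation itself is routine once set up; the real care is entirely in bookkeeping the grading shifts, and in particular pinning down that $\Ext^n$ picks up a $(-n)$ rather than a $(+n)$. Concretely, I would verify: $K_n = D(n)$ (shift $+n$ because we need $\deg$ of the generator $\partial_1\wedge\cdots\wedge\partial_n$-dual-direction to offset $\sum\deg\partial_i = -n$); then $\Hom_D(D(n),D) = D(-n)$-as-a-right-module $= D^r(-n)$; and the last differential into $K_n$ dualizes to right multiplication by the $\partial_i$, so the surviving cohomology is $(D/(\partial)D)(-n)$ with its right structure. A sign check on the Koszul differentials (using $\tau$ or just direct computation) confirms no further twist appears. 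One should also double-check that $H^n$ is genuinely the cokernel, i.e. that the complex is right-exact at the top spot — immediate since $K_{n+1} = 0$. Thus the whole argument reduces to one explicit graded Koszul computation, and I expect the only place a reader (or author) can slip is the direction of the degree-$n$ shift.
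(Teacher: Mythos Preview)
Your proposal is correct and follows essentially the same approach as the paper: resolve $^lR$ by the Koszul complex $\mathbb{K}$ on $\partial_1,\ldots,\partial_n$ over $^lD$, note $\mathbb{K}_n = D(+n)$, apply $\Hom_D(-,D)$ to obtain $\mathbb{K}^*_n = D^r(-n)$, and read off $H^n(\mathbb{K}^*) = R^r(-n)$. Your write-up is in fact more explicit about why the shift comes out as $(-n)$ rather than $(+n)$, but the argument is the same.
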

\begin{proof}
 It is well known that the Koszul complex ($\K$) of $ ^lD$ \wrt \ $\partial_1, \ldots, \partial_n $ is acyclic.
 Note $H_0(\K) = \  ^lR$. So $\K$ is a free resolution of $ ^lR$ (as left $D$-modules). We note that
 $\K_n = \ ^lD(+n)$ and $\K_{n-1} = \ ^lD(+(n-1))^n$.
 
 We consider the complex $\K^* = \Hom_D(\K, D)$.  Ignoring shifts  it is elementary to see that $H^n(\K^*) = R^r$.
 As shifts are vital to us, we note that $\K^*_n = D^r(-n)$. Thus $H^n(\K^*) = R^r(-n)$. 
 
 Finally note that $\Ext^n_D( ^lR, D) = H^n(\K^*)  = R^r(-n)$.
\end{proof}

\begin{example}\label{ext-de-rham}
 Let $\mathcal{T}$ be a graded Lyubeznik functor on $^*Mod(R)$. 
Then de Rham cohomology  module $H^j(\partial, \mathcal{T}(R))$ is concentrated in degree $-n$ (see \ref{corollary}).
We note that we have an isomorphism of graded $K$-vector spaces
\begin{equation*}
H^\nu\left(\partial, \T(R) \right)  \cong \Tor^{D}_{n-\nu}(R^r, \T(R)).  \tag{ i }
\end{equation*}
It follows that 
$\Tor^{D}_{n-\nu}(R^r, \T(R))(-n)$  is concentrated in degree zero.
By \ref{bjork-ext}(2) we have a graded isomorphism
$$\Ext^\nu_{D}( \ ^lR, \T(R)) \cong \Tor^{D}_{n-\nu}((\ ^lR)^\dagger, \T(R)). $$
By Lemma \ref{dual-R} we get that $(\ ^lR)^\dagger \cong  R^r(-n)$.
Thus we have a graded  isomorphism
\[
 \Ext^\nu_{D}( \ ^lR, \T(R)) \cong  H^\nu\left(\partial, \T(R) \right)(-n)
\]
Thus $ \Ext^\nu_{D}( \ ^lR, \T(R))$ is concentrated in degree zero.
\end{example}

\s \textit{Evidence for validity of Conjecture \ref{conj}:}
Fix $\nu \geq 0$. Suppose there exists $m \in \mathbb{Z}$ such that $\Ext^\nu_D(M, N)$ is concentrated in degree $m$ for all
 left, graded, holonomic, generalized Eulerian $D$-modules   $M$ and  $N$. We show that necessarily $m = 0$.
 
 \begin{enumerate}
  \item $\nu = 0:$ Let $M$ be a a \textit{non-zero} left, graded, holonomic, generalized Eulerian $D$-module. Then the identity
  map $1_M \in \Hom_D(M,M)$ and is \emph{non-zero}.
  \item $\nu = 1:$  Let $f$ be an irreducible polynomial in $R$. Let $P = (f)$.  Consider the exact sequence of $R$-modules:
  \[
   0 \rt \  ^lR \rt  \ ^lR_f \rt H^1_{P}(R) \rt 0.  \tag{*}
  \]
It is easily verified that this is in fact an exact sequence of left $D$-modules. If (*) is split  sequence of $D$-modules then
it is split as $R$-modules. In particular we get that $P$ is an associate prime of $R_f$, a contradiction.
Set $M = H^1_P(R)$ and $N = \ ^lR$. Then the exact sequence  (*) is a non-zero element in
$\Ext^1_D(M,N)_0$.
\item $\nu = n:$ Let $N_n = H^n_\m(R)$. Then by \cite[Theorem 2.4]{Puthenpurakal1} it follows that
$H^n(\partial, N_n) = K$ (in degree $-n$). So we have that $\Ext^n_D(\ ^lR, N_n)$ is non-zero and concentrated in degree
$0$.
\item $\nu = n-1:$  Set $P_{n-1} = (X_1,\ldots, X_{n-1})$.  Set $N_{n-1} = H^{n-1}_{P_{n-1}}(R)$. Then by 
\cite[Theorem 4.3]{Puthenpurakal1} it follows that
$H^{n-1}(\partial, N_{n-1}) = K$ (in degree $-n$). So we have that $\Ext^{n-1}_D(\ ^lR, N_{n-1})$ is non-zero and concentrated in degree
$0$.
\item $1 \leq \nu \leq n-2$. Set $P_\nu = (X_1,\ldots, X_{\nu})$.  Set $N_{\nu} = H^{\nu}_{P_{\nu}}(R)$. Then by 
techniques similar to  \cite[Theorem 2.4]{Puthenpurakal1}, 
\cite[Theorem 4.3]{Puthenpurakal1}  (and a little tedious induction) it follows that
$H^{\nu}(\partial, N_{\nu}) = K$ (in degree $-n$). So we have that $\Ext^{\nu}_D(\ ^lR, N_{\nu})$ is non-zero and concentrated in degree
$0$.
\item $\nu = 2$. Let $I$ be a graded ideal of height $g$ such that $H^i_I(R) = 0$ for $i > g + 1$ and $H^{g+1}_I(R) = E(n)^a$ for some
$a \geq 1$ (for a specific example see example 6.1 in \cite{W}). 
Note $H^g_I(R)$ is non-zero and say has associate primes $P_1, \ldots,P_c$. Then note that $P_i$ are the minimal primes of $I$
of height $g$ (in particular they are graded and not equal to $\m$). Let $f \in \m \setminus \cup_{i = 1}^{c}P_i$
be homogeneous. We have an exact
sequence of $R$-modules (and also of $D$-modules)
\[
 0 \rt H^g_I(R) \rt H^{g}_I(R)_f \rt H^{g+1}_{(I,f)} \rt E(n)^a \rt E(n)^a_f = 0. \tag{ **}
\]
Clearly we have that $(**)$ is an element of $\Ext^2_D(E(n)^a, H^g_I(R))_0$. I believe it is non-zero. Unfortunately
I do not know how to show it.
 \end{enumerate}

\bibliographystyle{amsplain}

\end{document}